 \newtheorem{theorem}{Theorem}[section]
 \newtheorem{corollary}[theorem]{Corollary}
 \newtheorem{lemma}[theorem]{Lemma}
 \newtheorem{proposition}[theorem]{Proposition}
 \newtheorem{conjecture}[theorem]{Conjecture}
 \theoremstyle{definition}
 \theoremstyle{remark}
 \newtheorem{example}{Example}
 \numberwithin{equation}{section}
\DeclareMathOperator{\im}{Im}
\DeclareMathOperator{\supp}{\mathrm{supp}}
\DeclareMathOperator*{\curl}{\mathrm{curl\,}}
\DeclareMathOperator*{\di}{\mathrm{div \,}}
\def\R#1{\mathbb{R}^{#1}}
\def\I{\mathrm{i}}
\def\R{{\mathbb R}}
\def\C{{\mathbb C}}
\def\x{{\mathbf x}}
\def\y{{\mathbf y}}
\begin{document}

\title{On the curvature of some free boundaries in higher dimensions}

\author{Bj\"orn Gustafsson}

\address{Department of Mathematics\\
KTH\\ 100 44 Stockholm
\\Sweden}

\email{gbjorn@kth.se}

\author{Makoto Sakai}

\address{Department of Mathematics\\
Tokyo Metropolitan University\\
Minami-Ohsawa\\
Hachioji-shi,\\
Tokyo 192-0397, Japan}

\email{sakai@tmu.ac.jp}

\date{December 16, 2011}

\thanks{Paper supported by Swedish Research Council, the Mittag-Leffler Institute,
the G\"oran Gustafsson Foundation, and the European Science Foundation Research Networking Programme HCAA }


\begin{abstract}
It is known that any subharmonic quadrature domain
in two dimensions satisfies a natural inner ball
condition, in other words there is a specific upper bound
on the curvature of the boundary. This result directly
applies to free boundaries appearing in obstacle
type problems and in Hele-Shaw flow. In the present paper
we make
partial progress on the corresponding question in higher
dimensions. Specifically, we prove the equivalence between
several different ways to formulate the inner ball condition,
and we compute the Brouwer degree for a geometrically
important mapping related to the Schwarz potential of the boundary. The latter gives in particular a new proof in the two dimensional case.

\end{abstract}

\keywords{quadrature domain, inner ball condition,
Schwarz potential, Brouwer degree.\\
{\bf Mathematics Subject Classification.} 35R35 (primary), 31B20, 53A05.}

\maketitle


\section{Introduction}

In the present paper we study the curvature of a
free boundary which comes up in some obstacle type problems, more specifically
Laplacian growth (Hele-Shaw flow moving boundary problem),
quadrature domains and partial balayage. The final aim of the investigations is to show
that the free boundary in question satisfies a certain inner ball condition (a specific upper bound
on the curvature). This goal has previously been achieved in the case of two dimensions (see \cite{Gustafsson-Sakai 03} and
\cite{Gustafsson-Sakai 04}; compare also \cite{Schaeffer 77}).
Here we shall give some partial results (but no complete solution) in higher dimensions, and in passing also obtain a new proof for the two dimensional case.

The geometric property we aim at proving can most easily be stated in terms of quadrature domains for subharmonic
functions \cite{Sakai 82}. Let $\mu$ a positive Borel measure with compact support in $\R^n$.
A bounded open set $\Omega\subset \R^n$ is called a {\it quadrature open set} for
subharmonic functions with respect to $\mu$ if $\mu(\R^n\setminus\Omega)=0$ and
$$
\int_\Omega h \,d\mu\leq \int_\Omega h\,dm
$$
for all integrable (with respect to Lebesgue measure $dm$) subharmonic functions $h$ in $\Omega$.
A quadrature open set which is connected is a {\it quadrature domain}. There is a natural
process of balayage of measures to a prescribed density (partial balayage) by which $\Omega$
can be constructed from $\mu$ when it exists, see e.g. \cite{Sakai 82}, \cite{Gustafsson-Sakai 94},
\cite{Gustafsson 90}.
This balayage process is also equivalent to solving a certain obstacle problem \cite{Sakai 83}.

In terms of the difference $u=U^\mu -U^\Omega$ between the Newtonian potentials of $\mu$ and $\Omega$,
the latter considered as a body of density one, the quadrature property spells out to
$$
\begin{cases}
u\geq 0 \quad \text{in } \R^n,\\
u=0 \quad \text{outside } \Omega,\\
\Delta u = \chi_\Omega -\mu \quad \text{in } \R^n.
\end{cases}
$$
The function $u$ appearing here is sometimes called the modified Schwarz potential of $\partial\Omega$,
see \cite{Shapiro 92} for example.
What has been proved in two dimensions, and what we like to extend to higher dimension,
is that $\Omega$ in the above situation can be written as the union of open balls centered
in the closed convex hull $K$ of ${\rm supp\,}\mu$:
\begin{equation}\label{innerball}
\Omega=\cup_{x\in K} B(x,r(x)).
\end{equation}
Here $r(x)\geq 0$ denotes the radius of the ball at $x$, allowing the possibility $r(x)=0$,
i.e., that the ball is empty. We refer to (\ref{innerball})
as $\Omega$ satisfying the inner ball property with respect to $K$.
For further discussion and motivations in the present context, see \cite{Gustafsson-Sakai 03}, \cite{Gustafsson-Sakai 04}.

The inner ball property concerns the geometry of $\partial\Omega$ outside any closed half-space $H$
containing ${\rm supp\,}\mu$. By a certain ``localization'' procedure the part $\Omega\setminus H$ of
$\Omega$ which is outside $H$ can be shown to be identical with a quadrature open set for some positive measure with support
on $\partial H$, see \cite{Gustafsson-Sakai 94}, \cite{Sakai 10}. For this reason it is enough to
prove (\ref{innerball}) in the case that $\mu$ has support in a hyperplane, and by a further localization
one may even assume $\mu$ to have a continuous density on it. For convenience we shall
take the hyperplane in question to be $\{x\in\R^n: x_n=0\}$.
It is known (see \cite{Sakai 82}, \cite{Gustafsson-Sakai 94}) that given any positive measure with compact
support in this hyperplane, which we identify with $\R^{n-1}$, there is a uniquely determined quadrature
open set, which moreover is symmetric about the hyperplane and is convex in the $x_n$-direction
(i.e., the intersection with any straight line perpendicular to the hyperplane is connected). Thus the quadrature
open set can be described in terms of a graph of a function $g$ defined in an open subset $D$ of the hyperplane. It is known
from the regularity theory of free boundaries (see \cite{Friedman 82}, \cite{Caffarelli 98}, and in
the present context \cite{Gustafsson-Sakai 94}) that this function $g$ is real analytic.
It will be enough to consider the case that $D$ is connected, because in the disconnected case
the discussions will apply to each component separately.

In the paper we shall therefore study the geometry of quadrature domains for a measure with support
in the hyperplane $\R^{n-1}$, the corresponding modified Schwarz potential $u$, and various
differential geometric objects derived from it. The paper consists of two main parts.
The first part starts with some differential geometric preliminaries (Section~\ref{sec:diffgeom})
and ends up with a proof of equivalence of
several different formulations of the inner ball condition (Section~\ref{sec:innerball}). This part is analogous to a corresponding
part in \cite{Gustafsson-Sakai 03}, but new difficulties appear in the higher dimensional case. In the second part
of the paper, Section~\ref{sec:boundaries},
we develop tools for studying the geometry by means of
vector fields and differential forms defined in terms of $u$.

To briefly explain, in terms the two dimensional situation, what we do in the second
part of the paper, let $\Omega^+$ denote the part of the quadrature domain which lies in
the upper half space (half plane) and let $S(z)$ be the Schwarz function (see \cite{Davis 74},
\cite{Shapiro 92}) of $(\partial\Omega)^+$.
This is in our case given by $S(z)=\bar z-4\frac{\partial u}{\partial z}$, it is analytic in $\Omega^+$,
and equals $\bar z$ on $(\partial\Omega)^+$. What we study in the second part of the paper is the higher
dimensional counterpart of the mapping $\sigma: \Omega^+\to \C$ defined by
$$
\sigma (z)= \frac{1}{2}zS(z)=\frac{1}{2}r^2-r\frac{\partial u}{\partial r} +\I \frac{\partial u}{\partial \varphi},
$$
the last member referring to polar coordinates. We also study in higher dimensions that curve $\gamma$
which in two dimensions is defined by $\frac{\partial u}{\partial \varphi}=0$. In the two dimensional case a proof of the
inner ball property can be based on
the topological property of $\gamma$ that it separates the two domains
$\frac{\partial u}{\partial \varphi}<0$ and $\frac{\partial u}{\partial \varphi}>0$ from each other, see
\cite{Gustafsson-Sakai 04}. There seems to be no direct counterpart of this kind of proof in higher dimensions.

A slightly different proof in two dimensions uses the argument principle for $\sigma$,
see Corollary~\ref{cor:main} in the present paper. We have not been able to generalize this proof either to higher
dimensions, even though we think that such a proof may not be completely out of reach. At least we have
computed the relevant mapping degree of $\sigma$ in higher dimensions, and this result, Theorem~\ref{thm:main},
may be considered to be the main result in the second part of the paper. Another avenue which we have pursued
to some extent is the investigation of the general behaviour of the curve $\gamma$. This curve starts out at the origin
and reaches $(\partial\Omega)^+$ only at points where the largest inner ball centered at the origin touches
$(\partial\Omega)^+$, see Proposition~\ref{prop:gamma}. If we could establish for example that $\gamma$
were a smooth curve (no branchings) a proof of the inner ball property would not be far away.

Thus we hope that the partial results we obtain in this paper will turn out to be useful in forthcoming
attempts to prove the inner ball property for quadrature domains, explicitly formulated in
Conjecture~\ref{conj:main}.

\section{Differential geometric preliminaries}\label{sec:diffgeom}

\subsection{Notations}

In this section we decompose the coordinates in $\R^n$ typically as $(u,v)$ where
$u=(u_1,...,u_{n-1})\in \R^{n-1}$, $v\in \R$.
We denote by $(\R^n)^+=\{(u,v)\in\R^n : v>0\}$ the upper half space.
Let $g$ be a positive, twice continuously differentiable function
defined in a bounded domain $D\subset \R^{n-1}$ and assume that
$g(u)\to 0$ as $u\to \partial D$
(the boundary as a subset of $\R^{n-1}$).
It is convenient to set $g(u)=0$ for $u\notin D$, and then
$g$ is defined and continuous on all $\R^{n-1}$.
Set
\begin{equation}\label{Omega}
\Omega =\{(u,v)\in \R^{n}: u\in D , \, |v|<g(u) \},
\end{equation}
and for any subset $A\subset \R^n$, $A^+= A\cap (\R^n)^+$.

The normal line of $(\partial\Omega)^+$ at a point $(u,g(u))$
is given in parametrized form as
$$
t\mapsto (u,g(u)) +t(\nabla g(u), -1))
$$
and it intersects the hyperplane $\R^{n-1}$ for $t=g(u)$,
i.e., at the {\em foot point}
$$
p(u) = u + g(u) \nabla g (u).
$$
Let
$$
N_u =\{(u,g(u)) +t(\nabla g(u), -1)): 0<t< g(u)\}
$$
be the part of the normal line which is between the base point on
$(\partial\Omega)^+$ and the foot point. The length of $N_u$ is
$$
|N_u | = g(u)\sqrt{1+|\nabla g(u) |^2}.
$$

For any point $x\in\Omega^+$ there is at least one closest point $(u,g(u))$ on
$(\partial\Omega)^+$. Then $x\in N_u$ and, in particular,
$x\in B(p(u), |N_u|)$.
Note that $N_u$ is one of the radii in $B(p(u), |N_u|)$.
It follows that we always have inclusions

\begin{equation*}
\Omega^+\subset\bigcup_{u\in D} N_u
\subset \bigcup_{u\in D} B(p(u), |N_u|).
\end{equation*}
Also,
\begin{equation}
\Omega\subset \bigcup_{u\in D} B(p(u), |N_u|).
\label{OmegaB}\end{equation}


\subsection{The first and second fundamental forms}\label{sec:fundamentalforms}

We shall discuss $(\partial\Omega)^+$ from a differential geometric point
of view. We consider it as  a Riemannian
manifold of dimension $n-1$ and with coordinates
$u_1, ...,u_{n-1}$. The Riemannian metric is that inherited
from $\R^n$.
We shall write certain quantities considered as vectors in
$\R^n$ in bold.
These involve the ``moving point'' on $(\partial\Omega)^+$
(or lift map $D\to (\partial\Omega)^+$)
$$
{\bf x} ={\bf x}(u) = (u,g(u)),
$$
its differential (a vector-valued differential form)
$$
d{\bf x} = (du_1, ...,du_{n-1}, \sum_{i=1}^{n-1}
\frac{\partial g(u)}{\partial u_i} du_i),
$$
the unit normal vector
$$
{\bf n} ={\bf n} (u)=\frac{(\nabla g(u) , -1)}{\sqrt{|\nabla g(u)|^2 +1}}
$$
and its differential $d{\bf n}$ (which becomes
somewhat complicated when written out in components).
For a point $x\in\Omega^+$ we sometimes write it in bold if we think of it
as the vector from the origin to $x$.

A tangent vector $\boldsymbol{\xi}$ on $(\partial\Omega)^+$
may be thought of in an  abstract way as a derivation:
$$
\boldsymbol{\xi} =\sum_{i=1}^{n-1}{\xi}_i\frac{\partial}{\partial u_i}.
$$
Letting this $\boldsymbol{\xi}$ act on the moving point ${\bf x}(u)$
or, equivalently, letting the differential $d{\bf x}$ act on
$\boldsymbol{\xi}$ gives the same tangent vector regarded as a vector
embedded in $\R^n$:
$$
\langle d{\bf x}, \boldsymbol{\xi} \rangle
=(\xi_1,...,\xi_{n-1},
\sum_{i=1}^{n-1}\xi_i \frac{\partial g}{\partial u_i}).
$$

In classical differential geometry
(see for example \cite{Frankel 97}) one associates to any
hypersurface in Euclidean space two fundamental forms.
The first fundamental form is the metric tensor, which
gives the inner product on each tangent space. It is in our case
\begin{align*}
ds^2 = d{\bf x}\cdot d{\bf x}
&=du_1^2+...+du_{n-1}^2 +(\sum_{i=1}^{n-1}
\frac{\partial g}{\partial u_i} du_i)^2 \\
&=\sum_{i,j=1}^{n-1} (\delta_{ij} + \frac{\partial g}{\partial u_i}
\frac{\partial g}{\partial u_j}) du_i \otimes du_j,
\end{align*}
where the dot denotes the scalar product in $\R^n$. The second
fundamental form is (up to a sign)
$$
d{\bf x}\cdot d{\bf n}
=\frac{1}{\sqrt{1+|\nabla g|^2}}\sum_{i,j=1}^{n-1}
\frac{\partial^2 g}{\partial u_i \partial u_j}du_i\otimes du_j.
$$
It contains information on
how ${\bf n}$ rotates in different directions,
i.e.  on how  $(\partial\Omega)^+$ is curved within $\R^n$.
The above expression for $d{\bf x}\cdot d{\bf n}$
can be derived as follows: Since ${\bf n}$
is orthogonal to $(\partial\Omega)^+$ we have
$\frac{\partial {\bf x}}{\partial u_i}\cdot {\bf n}=0$ for all $i$. Therefore
\begin{align*}
\frac{\partial {\bf x}}{\partial u_i}\cdot
\frac{\partial {\bf n}}{\partial u_j}
&=\frac{\partial }{\partial u_j}
(\frac{\partial {\bf x}}{\partial u_i}\cdot {\bf n})
-\frac{\partial^2 {\bf x}}{\partial u_i \partial u_j}\cdot {\bf n}
=0- (0,\frac{\partial^2 g}{\partial u_i \partial u_j})\cdot {\bf n} \\
&=\frac{1}{\sqrt{1+|\nabla g|^2}}
\frac{\partial^2 g}{\partial u_i \partial u_j},
\end{align*}
proving the formula.

Both fundamental forms are symmetric covariant 2-tensors,
i.e., symmetric bilinear forms on each tangent space.
When considered as bilinear forms
we shall call them $A$ and $B$ respectively
(namely $A=d{\bf x}\cdot d{\bf x}$,
$B=d{\bf x}\cdot d{\bf n}$).
The above formulas then
mean that for (abstract) tangent vectors $\boldsymbol{\xi}$ and $\boldsymbol{\eta}$ we have
\begin{align*}
A (\boldsymbol{\xi},\boldsymbol{\eta})
&=\sum_{i=1}^{n-1} \xi_i \eta_i
+\sum_{i,j=1}^{n-1}\frac{\partial g}{\partial u_i}
\frac{\partial g}{\partial u_j}\xi_i \eta_j, \\
B(\boldsymbol{\xi},\boldsymbol{\eta})
&=\frac{1}{\sqrt{1+|\nabla g|^2}}
\sum_{i,j=1}^{n-1}\frac{\partial^2g}{\partial u_i\partial u_j}\xi_i\eta_j.
\end{align*}
Note that $A$ is positive definite.

The principal curvatures of $(\partial\Omega)^+$ and the
corresponding principal directions are the eigenvalues and
eigendirections of the derivative of the Gauss normal map, taking
any point ${\bf x}\in(\partial\Omega)^+$ onto the normal vector at
the point: ${\bf n}({\bf x})\in S^{n-1}$; the derivative is the
induced linear map between the tangent spaces, intuitively $ d{\bf
x} \mapsto d{\bf n}$. In terms of abstract  tangent vectors this
becomes $C: \boldsymbol{\xi} \mapsto \boldsymbol{\eta}$, where
$\langle d{\bf x}, \boldsymbol{\eta}\rangle =\langle d{\bf n},
\boldsymbol{\xi}\rangle$, and one can also say that it is the map
obtained when expressing the second fundamental form in terms of the
first:
$$
A(\boldsymbol{\zeta} ,C\boldsymbol{\xi}
)=B(\boldsymbol{\zeta},\boldsymbol{\xi}).
$$

It is well-known (and easy to prove) that these eigenvalues
and eigendirections coincide with the
stationary points and corresponding stationary directions for the
quotient of the fundamental forms, namely for the map
$$
\boldsymbol{\xi}\mapsto\frac{B(\boldsymbol{\xi},\boldsymbol{\xi} )}{A(\boldsymbol{\xi}, \boldsymbol{\xi})}
$$
($\boldsymbol{\xi} \in \R^{n-1}$, $\boldsymbol{\xi}\ne 0$). For example, the smallest eigenvalue
(the smallest principal curvature) coincides with the minimum value of
$B/A$.


\subsection{The function $\Phi$}\label{Phi}

Next we introduce the function
$$
\Phi(u)=\frac{1}{2} (|u|^2 + g(u)^2)=\frac{1}{2}|{\bf x} (u) |^2, \quad (u\in D)
$$
namely half of the squared distance from points on $(\partial\Omega)^+$
to the origin.
More generally,
for any $c=(a,b)$ with $a\in\R^{n-1}$, $b\geq 0$ we set
$$
\Phi_c (u) =\frac{1}{2}(|u-a|^2 + (g(u)-b)^2)
=\frac{1}{2}|{\bf x} (u) -c|^2,
$$
considered to be defined for those $u\in D$ for which $g(u)>b$.
Then $\Phi_c$ is twice continuously differentiable with
$$
\frac{\partial \Phi_c }{\partial u_i}
= u_i-a_i + (g(u)-b)\frac{\partial g}{\partial u_i},
$$
\begin{align}
\frac{\partial^2 \Phi_c}{\partial u_i\partial u_j}
&= \delta_{ij}
+\frac{\partial g}{\partial u_i} \frac{\partial g}{\partial u_j}
+(g(u)-b)\frac{\partial^2 g}{\partial u_i\partial u_j } \label{nabla2Phi}\\
&=\frac{\partial^2 \Phi}{\partial u_i\partial u_j}
 - b \frac{\partial^2 g}{\partial u_i\partial u_j}.\label{nabla3Phi}
\end{align}

For $c=0$ we can write
$$
\frac{\partial^2 \Phi}{\partial u_i \partial u_j}
= \delta_{ij}
+\frac{\partial g}{\partial u_i} \frac{\partial g}{\partial u_j}
+ g\sqrt{1+|\nabla g|^2} \frac{1}{\sqrt{1+|\nabla g|^2}}
\frac{\partial^2 g}{\partial u_i\partial u_j }.
$$
Thus considering the Hessian matrix $\nabla^2 \Phi
=(\frac{\partial^2 \Phi}{\partial u_i \partial u_j})$
as  a tensor,
or bilinear form,
we see that it is related to the two fundamental forms by
\begin{equation}\label{nablaPhiN}
\nabla^2 \Phi = d{\bf x}\cdot d{\bf x}
+ |N_u| d{\bf x}\cdot d{\bf n}
=A + |N_u| B.
\end{equation}
We finally notice that $\Phi$ is related to the foot point map $p$ by
\begin{equation}
\nabla \Phi =p.
\label{Phip}\end{equation}
More generally, for any $a\in\R^{n-1}$ we have
\begin{equation}
\nabla \Phi_{(a,0)} (u) = p(u) - a.
\label{Phipa}\end{equation}


\subsection{The Poincar\'e metric}

The Poincar\'e metric in $(\R^n)^+$ is given by
$$
ds^2 = \frac{1}{v^2} (\sum_{j=1}^{n-1} du_j^2 + dv^2)
$$
The geodesics with respect to the Poincar\'e metric are the vertical
straight lines
$$
\begin{cases}
u= {\rm constant}, \\
v>0
\end{cases}
$$
together with all vertical semicircles with centers on $\R^{n-1}$,
namely all curves of the form
\begin{equation}\label{circle}
\begin{cases}
u \in L, \, v>0, \\
|u-a|^2 + v^2 = r^2,
\end{cases}
\end{equation}
where $L$ is a straight line in $\R^{n-1}$, $a\in L$ and $r>0$.

We consider now the variable transformation
$$
T: (u,v) \mapsto (s,t),
$$
where $s\in \R^{n-1}$, $t\in \R$, defined by
$$
\begin{cases}
s=u, \\
t=\frac{1}{2}(|u|^2 + v^2)
\end{cases}
$$
and, conversely,
\begin{equation}\label{subst}
\begin{cases}
u=s, \\
v=\sqrt{ 2t -|s|^2}.
\end{cases}
\end{equation}
Thus $T$ is one-to-one and takes $(\R^n)^+$ in the $(u,v)$-space onto the epiparabola
$$
W= \{(s,t)\in \R^n:  t > \frac{1}{2} |s|^2 \}
$$
in the $(s,t)$-space.
By the definition of $\Phi$, $T$ maps $\Omega^+$ onto the set
$$
V=\{(s,t)\in W: s\in D, \, t<\Phi (s)\},
$$
and it maps the graph of $g$ onto the graph of $\Phi$. If $\Phi$
is extended to all $\R^{n-1}$ by setting $\Phi (u)=\frac{1}{2}|u|^2$
outside $D$ (this corresponds to extending $g$ by zero outside $D$)
then $W\setminus V$ is the epigraph of $\Phi$.

\begin{lemma}\label{poincare}
The above map
$$
T:(\R^n)^+ \to W
$$
gives a one-to-one correspondence between the set of geodesics with respect to the
Poincar\'e metric in $(\R^n)^+$ and the set of straight lines in $W$.
\end{lemma}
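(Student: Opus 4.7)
The plan is to verify the correspondence explicitly, using the defining formula of $T$ to convert Euclidean sphere equations on the $(u,v)$-side into affine equations on the $(s,t)$-side. The key algebraic identity driving everything is
$$
|s-a|^2 + v^2 = |s-a|^2 + (2t-|s|^2) = 2t - 2s\cdot a + |a|^2,
$$
so that a Euclidean sphere $|u-a|^2 + v^2 = r^2$ in $(\R^n)^+$ is transformed under $T$ into the affine hyperplane
$$
t = s\cdot a + \tfrac{1}{2}(r^2 - |a|^2).
$$

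First I would handle the two types of geodesics in the forward direction. A vertical geodesic $\{(u_0,v): v>0\}$ maps onto $\{(u_0, t): t > \tfrac{1}{2}|u_0|^2\}$, which is the intersection of $W$ with the vertical line $s=u_0$ — evidently a straight line. For a semicircle geodesic (\ref{circle}) lying in the vertical 2-plane $L\times\R$, applying the identity above gives, within the plane $L\times\R$ (in $(s,t)$-space) that contains its image, the single linear equation $t = s\cdot a + \tfrac{1}{2}(r^2-|a|^2)$, which cuts out a straight line.

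Next I would prove surjectivity by constructing the inverse correspondence. Given a straight line $\ell\subset W$, parametrize it as $s(\lambda) = s_0 + \lambda e$, $t(\lambda) = t_0 + \lambda f$. If $e=0$, then $\ell$ is the image of the vertical geodesic $u=s_0$. If $e\neq 0$, let $L$ be the line through $s_0$ in direction $e$; I seek $a = s_0 + \mu e \in L$ and $r>0$ such that the semicircle $|u-a|^2+v^2=r^2$ in $L\times\R^+$ maps onto $\ell$. Matching coefficients of $\lambda$ in the identity $t(\lambda) = s(\lambda)\cdot a + \tfrac{1}{2}(r^2-|a|^2)$ forces
$$
\mu = \frac{f - s_0\cdot e}{|e|^2}, \qquad r^2 = 2t_0 - |s_0|^2 + \mu^2 |e|^2,
$$
and the condition $(s_0,t_0)\in W$, i.e.\ $2t_0 - |s_0|^2 > 0$, guarantees $r^2>0$, so a genuine semicircle geodesic exists.

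Injectivity of the map on geodesics then follows from the bijectivity of $T:(\R^n)^+\to W$ itself (\ref{subst}): two distinct geodesics have distinct point sets, hence distinct images. I do not expect a serious obstacle — the work is essentially the single sphere-to-hyperplane identity above, and the only point requiring care is keeping track of the domain constraints, namely that the image of a semicircle is automatically confined to $W$ (since $v>0$ corresponds to $t > \tfrac{1}{2}|s|^2$), and conversely that the constructed $r^2$ is strictly positive, both of which are built into the condition $(s,t)\in W$.
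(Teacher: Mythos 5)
Your proof is correct and follows essentially the same route as the paper: perform the substitution on the semicircle equation to obtain the affine relation $t = s\cdot a + \tfrac12(r^2-|a|^2)$, treat vertical geodesics separately, and run the argument backwards for surjectivity. The only difference is that you make explicit the inverse construction (solving for $\mu$ and $r^2$) where the paper simply remarks that ``the arguments can easily be run in the other direction,'' which is a fair amount of useful detail but not a different method.
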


\begin{proof}
Let $\gamma$ be
a geodesic in $(\R^n)^+$. If $\gamma$ is a vertical line,
then also $T(\gamma)$ is a vertical line in $W$, hence a straight
line. If $\gamma$ is of the form (\ref{circle}), then just performing
the substitution (\ref{subst})  we see that $T(\gamma)$ becomes
\begin{equation*}
\begin{cases}
s \in L, \\
t= s\cdot a + \frac{1}{2} (r^2 - |a|^2),
\end{cases}
\end{equation*}
hence it is a straight line. And the arguments can easily be run in
the other direction: every straight line in the $(s,t)$-space
which has a nonempty intersection with $W$ is of the form $T(\gamma)$
for some geodesic $\gamma$ in $(\R^n)^+$.
\end{proof}

In the new coordinates $(s_1,\dots, s_{n-1}, t)$ the
Poincar\'e metric takes the form
$$
ds^2 =\frac{1}{(2t-|s|^2)^2} \big( (2t-|s|^2)
\sum_{j=1}^{n-1}(ds_j)^2 +(\sum_{j=1}^{n-1}s_j ds_j)^2-
$$
$$
- \sum_{j=1}^{n-1}s_j (ds_j\otimes dt + dt\otimes ds_j) +
(dt)^2\big),
$$
as an easy calculation shows. Hence this is a metric in $W$ which is
complete and has constant negative curvature, and whose geodesics
are Euclidean straight lines.


\section{Equivalent criteria for the inner ball condition}\label{sec:innerball}

Below we state equivalent criteria for the domain $\Omega$ defined by
(\ref{Omega}) to satisfy the
inner ball condition for balls centred on the symmetry plane.
The definition of the inner ball condition can be taken to be
statement (v) in the theorem.

\begin{theorem}\label{theorem1}
With assumptions and notations as in Section~\ref{sec:diffgeom}
the following statements are equivalent.

\begin{enumerate}
\item[(i)]
The restriction of $\Phi$ to any convex subdomain of $D$ is
convex. (Note that $D$ is not required to be convex itself.)
Equivalent formulations:  the matrix of second derivatives
is positive semidefinite: $\nabla^2 \Phi \geq 0$ in $D$;
the extension of $\Phi$ to $\R^{n-1}$ is convex;
$W\setminus V$ is convex as a set.

\item[(ii)]
For every $c=(a,b)$ with $a\in \R^{n-1}$, $b>0$, the function
$\Phi_c$
is convex (alternatively: strictly convex) when restricted to any
convex subdomain of the set of $u\in D$ for which $g(u)>b$.

\item[(iii)]
The restriction of the foot point map
$p$ to any convex subdomain of $D$ is  monotone, i.e.
$$
(p(u)-p(u'))\cdot (u-u') \geq 0
$$
for all $u,u'$ in the subdomain. (Here the dot denotes the
scalar product in $\R^{n-1}$.) Equivalently, the matrix
$(\frac{\partial p_i}{\partial u_j}
+\frac{\partial p_j}{\partial u_i})$
is positive semidefinite.

\item[(iv)]
$$
\Omega =\bigcup_{u\in D} B(p(u), |N_u|).
$$

\item[(v)]
There exist radii $r=r(u) > 0$ such that
$$
\Omega = \bigcup_{u\in D} B(u,r(u)).
$$

\item[(vi)]
$$
N_{u}\cap N_{u'} = \emptyset\quad {\rm for\,\,}
u\ne u' \quad (u,u'\in D).
$$

\item[(vii)]
The principal curvatures at any point
$(u,g(u))$ of  $(\partial\Omega)^+$ are
$\geq  -\frac{1}{|N_u|}$.

\item[(viii)]
Every point in $\Omega^+$ has a unique closest neighbor
on $(\partial\Omega)^+$.

\item[(ix)]
Every point on $(\partial\Omega)^+$ is a closest point on
$\partial\Omega$ for some point in $D$.

\item[(x)]
$(\R^n)^+ \setminus \Omega$ is convex with respect to the Poincar\'e
metric in $(\R^n)^+$.
\end{enumerate}

\end{theorem}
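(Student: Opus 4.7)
The plan is to organize the ten equivalences around statement (i) as a hub, using the analytic identities of Section \ref{sec:diffgeom} to link (i) to each of (ii), (iii), (vii), (x), and then to translate (i) into the purely geometric statements (iv)--(vi) and (viii)--(ix).

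For the first group of equivalences I would read (i) $\Leftrightarrow$ (iii) directly off the identity $\nabla \Phi = p$ in (\ref{Phip}): convexity of $\Phi$ on a convex subdomain of $D$ is the monotonicity of its gradient, and the Hessian form of this is precisely the symmetric-part condition on $(\partial p_i/\partial u_j)$. The equivalence (i) $\Leftrightarrow$ (vii) comes from the decomposition $\nabla^2 \Phi = A + |N_u| B$ in (\ref{nablaPhiN}): since $A$ is positive definite, $\nabla^2\Phi \geq 0$ holds iff $B(\xi,\xi)/A(\xi,\xi) \geq -1/|N_u|$ for all $\xi \neq 0$, i.e., iff every principal curvature (every eigenvalue of the shape operator $C$) is $\geq -1/|N_u|$. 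The equivalence (i) $\Leftrightarrow$ (x) is obtained by transporting everything through Lemma \ref{poincare}: $T$ carries Poincar\'e geodesics to Euclidean straight lines and $(\R^n)^+ \setminus \Omega$ to $W\setminus V$, which is the epigraph of the extended $\Phi$, so Poincar\'e convexity of the former corresponds to Euclidean convexity of this epigraph, which is convexity of $\Phi$. For (i) $\Leftrightarrow$ (ii) I would combine (\ref{nabla3Phi}) with $\nabla^2 \Phi = A + |N_u|B$ and the identity $\nabla^2 g = \sqrt{1+|\nabla g|^2}\,B$ to obtain
\[
\nabla^2 \Phi_c = A + (g-b)\sqrt{1+|\nabla g|^2}\, B.
\]
For $b>0$ the coefficient of $B$ is strictly less than $|N_u|$, so (i) immediately gives strict convexity in (ii); conversely (ii) yields (i) either by letting $b\to 0$ or by noting that $\Phi_{(a,0)}$ differs from $\Phi$ by an affine function, so their convexity properties coincide.

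For the geometric block I would start by deriving (i) $\Rightarrow$ (iv). Fix $u_0\in D$ and consider $\Phi_{c_0}$ with $c_0=(p(u_0),0)$; by (\ref{Phipa}) the point $u_0$ is a critical point of $\Phi_{c_0}$, and convexity from (i)/(ii) forces it to be a global minimum on $D$. Hence $|(u,g(u))-p(u_0)|\geq |N_{u_0}|$ for all $u$, so $B(p(u_0),|N_{u_0}|)$ contains no point of $(\partial\Omega)^+$ and thus lies in $\Omega$; combined with the reverse inclusion (\ref{OmegaB}) this gives (iv). Then (iv) $\Rightarrow$ (v) by reindexing the ball family (working locally on convex pieces of $D$ and using monotonicity of $p$ to identify the effective centers in $D$), while (iv) $\Rightarrow$ (vi) follows because two radii $N_u$, $N_{u'}$ of distinct balls from the family cannot meet inside $\Omega$ without violating the tangency of each ball at $(\partial\Omega)^+$. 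The equivalence (vi) $\Leftrightarrow$ (vii) is the classical statement that the focal distance of the Gauss map equals $1/|k_{\min}|$, so normals overlap within distance $|N_u|$ precisely when a principal curvature violates the bound. Finally, (iv) immediately yields both (viii) (uniqueness of nearest boundary point, because each $x\in\Omega^+$ sits in a unique radius $N_u$) and (ix) (every boundary point $(u,g(u))$ is nearest to its foot point $p(u)$).

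The main obstacle, as I anticipate it, is handling the ball family cleanly. The foot-point map $p$ need not send $D$ into $D$, $D$ itself need not be convex, and the various statements have to be checked only on convex subdomains of $D$. I would circumvent this by working locally on such subdomains and assembling globally using the symmetry of $\Omega$ across $\R^{n-1}$ and the analyticity of $g$. A second delicate point is the ``strict'' version of convexity in (ii): this requires $b>0$ to force $(g-b)\sqrt{1+|\nabla g|^2}<|N_u|$ strictly, so that the equality case in the eigenvalue bound of (vii) is broken after the vertical shift; this same quantitative gain is what lets the critical-point argument for (iv) identify $u_0$ as a strict global minimum whenever $c_0$ is moved slightly off $\R^{n-1}$.
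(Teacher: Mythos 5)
Your proposal establishes the equivalence of the statements in the cycle $(i) \Leftrightarrow (ii) \Leftrightarrow (iii) \Leftrightarrow (vii) \Leftrightarrow (x)$ and then pushes forward $(i) \Rightarrow (iv) \Rightarrow (v), (vi), (viii), (ix)$. The analytic identities you use — $\nabla\Phi = p$ from \eqref{Phip}, $\nabla^2\Phi = A + |N_u|B$ from \eqref{nablaPhiN}, the transformation $T$ and Lemma~\ref{poincare} for (x), and the stationary-point argument with $\Phi_{(p(u_0),0)}$ for $(i)\Rightarrow(iv)$ — all match the paper's treatment.

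However, there is a structural gap: you never prove any implication that starts from $(v)$, $(viii)$, or $(ix)$ and returns into the cycle. You show $(iv)\Rightarrow(v)$, $(iv)\Rightarrow(viii)$, $(iv)\Rightarrow(ix)$, but not the converses (or paths like $(viii)\Rightarrow(\text{something in the cycle})$), so the ten statements are not actually shown to be equivalent. This is not a minor omission, because the paper's entire proof hinges on exactly one of these missing arrows: $(viii)\Rightarrow(ii)$. This implication is proved via a Motzkin-type convex-envelope argument — extending $\Phi_{(0,b)}$ by $\tfrac12|u|^2$ outside $D_b$, taking the convex envelope $\Psi$, locating a supporting affine function $L$ that touches $\tilde\Phi_{(0,b)}$ at two or more points with $g(u)>b$, and reading these off as two closest neighbors of a single interior point. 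This is the deepest step in the paper's proof and your proposal contains no substitute for it. Likewise, $(v)\Rightarrow(iv)$ (a compactness/limiting argument with balls $B(a_j, r(a_j))$ converging from inside $\Omega$) and $(ix)\Rightarrow(iv)$ are needed and absent.

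You attempt to close the cycle instead via a claimed equivalence $(vi)\Leftrightarrow(vii)$, invoked as ``the classical statement that the focal distance of the Gauss map equals $1/|k_{\min}|$.'' This does not carry the weight you put on it. The direction $(vii)\Rightarrow(vi)$ is fine (and is available anyway through $(vii)\Leftrightarrow(i)\Rightarrow(iv)\Rightarrow(vi)$). The direction $(vi)\Rightarrow(vii)$ is the substantive one: you must show that if some principal curvature drops below $-1/|N_{u_0}|$, then two of the \emph{finite} segments $N_u$, $N_{u'}$ actually intersect. The existence of a focal point at distance $t_0 < |N_{u_0}|$ gives vanishing of the Jacobian of the normal exponential map $\phi(u,t) = (u,g(u)) + t\,{\bf n}(u)$, but a vanishing Jacobian alone does not force non-injectivity, and even after arguing a sign change of $\det d\phi$ near $(u_0,t_0)$ (which does imply local non-injectivity) one still must check that the resulting intersection point lies in both segments $N_u$, $N_{u'}$ rather than on the full normal lines. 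This is not a quotable one-line classical fact; it is an argument of comparable weight to the convex-envelope step it is meant to replace, and it is not carried out. Until either that argument or the paper's $(viii)\Rightarrow(ii)$ step is actually supplied, and until $(v)\Rightarrow(iv)$ and $(ix)\Rightarrow(iv)$ are added, the proposal does not prove the theorem.
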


\begin{proof}

\medskip\noindent
(i)$\Rightarrow$(ii): Fix $b>0$ and $u\in D$.
For any $\boldsymbol{\xi}\in\R^{n-1}$ with $|\boldsymbol{\xi}|=1$, let
$$
\alpha =\sum_{i,j=1}^{n-1}
\frac{\partial^2g}{\partial u_i\partial u_j}\xi_i\xi_j.
$$
If $\alpha \geq -\frac{1}{2(g(u)-b)}$ then it follows from
\eqref{nabla2Phi} that
$$
\sum_{i,j=1}^{n-1}
\frac{\partial^2\Phi_c}{\partial u_i\partial u_j}\xi_i\xi_j
\geq \frac{1}{2},
$$
while if $\alpha < -\frac{1}{2(g(u)-b)}$ equation
\eqref{nabla3Phi} shows that
$$
\sum_{i,j=1}^{n-1}
\frac{\partial^2\Phi_c}{\partial u_i\partial u_j}\xi_i\xi_j
\geq \frac{b}{2(g(u)-b)}.
$$
From these two inequalities the strict convexity of $\Phi_c$
follows. (We even get a uniform lower bound of $\nabla^2\Phi_c$
in $\{u\in D: g(u)>b\}$.)

\medskip\noindent
(ii)$\Rightarrow$(i): Just let $b\to 0$ in \eqref{nabla3Phi}.

\medskip\noindent
(i)$\Leftrightarrow$(iii): This follows from \eqref{Phip},
which also shows that the matrix
$(\frac{\partial p_i}{\partial u_j})$ is symmetric itself
($\frac{\partial p_i}{\partial u_j}
=\frac{\partial^2\Phi}{\partial u_i \partial u_j}$).

\medskip\noindent
(i)$\Leftrightarrow$(vii): As remarked in Subsection~\ref{sec:fundamentalforms} the smallest
principal curvature of $(\partial\Omega)^+$ coincides with the minimum
value of the quotient $B/A$ between the two fundamental forms.
Therefore it is immediate from \eqref{nablaPhiN} that (vii)
is equivalent to $\nabla^2\Phi \geq 0$, i.e., to (i).

\medskip\noindent
(i)$\Rightarrow$ (iv): By \eqref{OmegaB} we only need to show that
\begin{equation}
B(p(u), |N_u|)\subset \Omega.
\label{BOmega}\end{equation}
for all $u\in D$.

Fix any $u_0\in D$. Then taking $a=p(u_0)$ in \eqref{Phipa}
we see that the map $u\mapsto\Phi_{(p(u_0),0)}(u)$ has a stationary
point at $u=u_0$. In view of the interpretation of
$\Phi_{(p(u_0),0)}$ as half of the squared distance
from $(p(u_0),0)$ to $(u,g(u))$ it follows that
this stationary point is a
(global) minimum if and only if \eqref{BOmega} holds for
$u=u_0$. But when $\Phi$ (equivalently $\Phi_{(p(u_0),0)}$)
is convex then every stationary point of $\Phi_{(p(u_0),0)}$
is a global minimum.
Hence \eqref{BOmega} holds for $u=u_0$.

\medskip\noindent
(iv)$\Rightarrow$(v): If the representation in (iv) holds then
we get a representation as in (v) by adding small balls
$B(u,r(u))\subset \Omega$ for those $u\in D$ which are not in the
range of $p$. (Clearly $p$ maps $D$ into $D$ when (iv) holds,
but it need not be onto.)

\medskip\noindent
(v)$\Rightarrow$(iv): Let $(u,g(u))\in(\partial\Omega)^+$.
Then, if (v) holds,  there exist points
$a_j \in D$ and $x_j \in B(a_j, r(a_j))$ such that
$x_j\to (u,g(u))$ as $j \to\infty$.
The smoothness of $(\partial\Omega)^+$ and the inclusions
$B(a_j, r(a_j))\subset \Omega$ imply that $a_j\to p(u)$
and $r(a_j)\to |N_u|$ and we conclude that
$B(p(u), |N_u|)\subset \Omega$. Now (iv) follows from
\eqref{OmegaB}.

\medskip\noindent
(iv)$\Leftrightarrow$(ix): Note that
$(u,g(u))\in(\partial\Omega)^+$ is a
closest point of $a\in D$ if and only if
$a=p(u)$ and $B(p(u), |N_u|)\subset\Omega$.
Thus $a$ is determined by $(u, g(u))$ and it follows immediately
(in view also of \eqref{OmegaB}) that all points on $(\partial\Omega)^+$
are such closest points if and only if the inner ball condition in
(iv) holds.

\medskip\noindent
(iv)$\Rightarrow$(vi):
Assume that (vi) fails, so that there exists
a point $x\in N_{u_1}\cap N_{u_2}$ for some
$u_1,u_2 \in D$, $u_1\ne u_2$. Without loss
of generality $|(u_1,g(u_1)) -x|\leq |(u_2,g(u_2)) -x|$. Then
$(u_1,g(u_1)) \in \overline{B(x,|(u_2,g(u_2)) -x|)}$.
Since $x\in N_{u_2}$ we have
$\overline{B(x, |(u_2,g(u_2))-x|)}
\subset B(p(u_2), |N_{u_2}|)\cup\{(u_2,g(u_2))\}$.
It follows that $(u_1,g(u_1))\in B(p(u_2), |N_{u_2}|)$.
But $(u_1,g(u_1))\notin \Omega$, hence (iv) does not hold.

\medskip\noindent
(vi)$\Rightarrow$(viii):
If $x\in \Omega^+$ has two closest
neighbours $(u_1, g(u_1)), (u_2, g(u_2)) \in (\partial\Omega)^+$ then
$x\in N_{u_1}\cap N_{u_2}$.

\medskip\noindent
(viii)$\Rightarrow$(ii):
This conclusion is somewhat related to \cite{Hormander 94}, Theorem 2.1.30
(attributed to Motzkin), but we give an independent proof.

Assume that (ii) fails and we shall produce a point
$c\in\Omega^+$ with at least two closest neighbors on
$(\partial\Omega)^+$.
We may assume that $\Phi_{(0,b)}$ is not convex for some $b>0$ in a
convex subdomain of $D_b=\{ u\in D: g(u)>b\}$. We extend $\Phi_{(0,b)}$ to
all $\R^{n-1}$ by setting
\begin{align*}
\tilde{\Phi}_{(0,b)} &=
\begin{cases}
\Phi_{(0,b)}    & \quad   {\rm for \,\,} u \in D_b \\
\frac{1}{2} |u|^2 & \quad  {\rm for \,\,} u \in \R^{n-1}\setminus D_b
\end{cases} \\
&=\frac{1}{2} (|u|^2 + |(g(u) -b)_+|^2 ).
\end{align*}
In the latter expression the plus subscript denotes positive
part and $g$ is assumed to be extended by zero outside $D$.

The function $\tilde{\Phi}_{(0,b)}$ is easily seen to be
continuously differentiable and by assumption it is not convex.
Let $\Psi$ be the convex envelope of  $\tilde{\Phi}_{(0,b)}$,
which can be defined as
\begin{equation}
\Psi (u) = \sup \{ L(u): L \,\,{\rm is \,\, affine \,\, and\, }\leq
\tilde{\Phi}_{(0,b)} \}.
\label{Psi}\end{equation}
Equivalently, $\Psi$ is the function whose epigraph,
${\rm epi\,}\Psi =\{(u,v)\in\R^n: v\geq \Psi (u)\}$,
is the closed convex hull of the epigraph of $\tilde{\Phi}_{(0,b)}$.
In the present case, because of the strict convexity of
$\tilde{\Phi}_{(0,b)}$ outside a compact set, the convex hull
will be closed right away  (before taking the closure). Thus
\begin{equation}
{\rm epi\,} \Psi ={\rm conv\,}({\rm epi\,} \tilde{\Phi}_{(0,b)}),
\label{conv}\end{equation}
${\rm conv}$ denoting ``convex hull''.

Since $\tilde{\Phi}_{(0,b)}$ is not convex there exists
$u_0\in \R^{n-1}$ such that
$$
\Psi (u_0) < \tilde{\Phi}_{(0,b)}(u_0).
$$
It is easy to see that the supremum in \eqref{Psi} is attained
for each fixed $u$. Hence
$$
\Psi (u_0) = L(u_0)
$$
for some affine $L$ satisfying
\begin{equation}
L(u) \leq \tilde{\Phi}_{(0,b)}(u) \quad{\rm for\,\, all\,\,} u.
\label{LPhi}\end{equation}

By \eqref{conv}
the point $(u_0,L(u_0))=(u_0,\Psi (u_0))\in {\rm epi\,} \Psi$
can be written as a finite convex combination of points in
the epigraph of $\tilde{\Phi}_{(0,b)}$.
Since this convex combination must take place within
the graph of $L$, $u_0$ is a convex combination of finitely many
points $u$ (at least two are needed) for which
$L(u)=\tilde{\Phi}_{(0,b)}(u)$. It follows that equality is attained in
\eqref{LPhi} for at least two different $u$.

Now write
$$
L(u) =a\cdot u + k,
$$
where $a\in \R^{n-1}$, $k\in \R$ and the dot denotes scalar product
in $\R^{n-1}$. Then \eqref{LPhi} can be written
\begin{equation}
|u-a|^2 + |(g(u) -b)_+|^2 \geq 2k + |a|^2,
\label{ug}\end{equation}
where equality is attained for at least two different $u\in \R^{n-1}$.

Since the left member in \eqref{ug} is $\geq 0$ with equality in at
most one point (namely $u=a$) it follows that
\begin{equation}
2k+|a|^2 >0.
\label{pos}\end{equation}
This positive minimum value can not be attained when
$g(u)\leq b$ because for those $u$ the derivative of the
left member in \eqref{ug} is $2(u-a)\ne 0$. Note that
$u=a$ does not satisfy \eqref{ug} when $g(u)\leq b$
because of \eqref{pos}.

Thus $g(u)>b$ whenever equality holds in \eqref{ug}. Clearly also
$g(a)>b$ by \eqref{ug} and \eqref{pos}, showing that $a\in D_b$. It now
follows that
$$
|u-a|^2 + |g(u) -b|^2 \geq 2k + |a|^2
$$
for all $u\in D$ with equality for at least two different
$u$ satisfying $g(u)>b$.

In conclusion we have produced a point $c=(a,b)\in \Omega^+$
having at least two closest neighbors on $(\partial\Omega)^+$.

\medskip\noindent
(i)$\Leftrightarrow$(x): That $(\R^n)^+\setminus \Omega$ is convex
with respect to the Poincar\'e metric means by definition that
every Poincar\'e geodesic $\gamma$ in $(\R^n)^+$ meets $\Omega$
at most once (i.e., $\gamma\setminus \Omega$ has at most one component).
By Lemma~\ref{poincare} this occurs if and only if the straight line
$T(\gamma)$ in $W$ meets $V$ at most once. This is the same as saying that
$W\setminus V$ is convex as a set, which is easily seen
to be equivalent to the convexity of $\Phi$
($W\setminus V$ is the epigraph of the extension of $\Phi$
obtained by setting $g=0$ outside $D$).
This proves (i)$\Leftrightarrow$(x).

\end{proof}


\section{Boundaries of quadrature domains}\label{sec:boundaries}

\subsection{Notations}

In the rest of the paper we shall write points in $\R^n= \R^{n-1}\times \R$ typically
as $x=(x',x_n)$, with $x'=(x_1,\dots, x_{n-1})\in\R^{n-1}$. The letter $u$ will be used to
denote a specific function, see below.

Let $\mu$ be a finite positive (and not identically zero) Borel measure with compact
support in the hyperplane $\R^{n-1}=\{x\in \R^n: x_n=0\}$. Then there exists a unique
quadrature open set $\Omega\subset \R^n$ for subharmonic functions with respect to $\mu$,
see \cite{Sakai 82}, \cite{Gustafsson-Sakai 94}. Thus $\mu(\R^n\setminus\Omega)=0$ and
$$
\int_\Omega h \,d\mu\leq \int_\Omega h\,dm
$$
for all integrable (with respect to Lebesgue measure $m$) subharmonic functions $h$ in $\Omega$.
By uniqueness $\Omega$ is symmetric with respect to the hyperplane $\R^{n-1}$.
We shall assume that $\Omega$ is connected (i.e., a quadrature domain)
and discuss the curvature of $(\partial\Omega)^+$. In the nonconnected case the
discussions will simply apply to each component of $\Omega$.

Let
$$
u=U^\mu -U^\Omega
$$
be the difference between the Newtonian potentials of $\mu$ and $\Omega$,
the latter considered as a body of density one. Then
\begin{equation}\label{Deltau}
\begin{cases}
u\geq 0 \quad \text{in } \R^n,\\
u=0 \quad \text{outside } \Omega,\\
\Delta u = \chi_\Omega -\mu \quad \text{in } \R^n.
\end{cases}
\end{equation}
From \eqref{Deltau} it follows that $u\in C^1((\R^n)^+)$, hence
$\nabla u=0$ on $(\R^n)^+\setminus \Omega$ (since $u$ attains its minimum value
there).

For proving the inner ball property it is enough to consider the case that $\mu$ has a
continuous density on $\R^{n-1}$:
\begin{equation}\label{dmufdx}
d\mu = fdx_1\dots dx_{n-1},
\end{equation}
with $f=f(x')>0$ on $D=\Omega\cap\R^{n-1}$, $f=0$ on $\R^{n-1}\setminus D$
and $f$ continuous on $\R^{n-1}$ (see Subsection~\ref{sec:gammasigma}
for a motivation of this). In that case, $u$ is continuous across
$\R^{n-1}$ while $\frac{\partial u}{\partial x_n}$ makes a jump of size
$f$ across $\R^{n-1}$ (this is the meaning of the distributional Laplacian
in \eqref{Deltau}).
In summary, assuming \eqref{dmufdx}  $u$ satisfies in $\Omega^+$ the overdetermined system
\begin{equation}\label{overdetermined}
\begin{cases}
\Delta u =1 \quad \text{in }\quad \Omega^+,\\
u=|\nabla u|=0 \quad \text{on }\quad (\partial\Omega)^+,\\
-2 \frac{\partial u}{\partial x_n} = f \quad \text{on} \quad D, 
\end{cases}
\end{equation}
where in the last equation $\frac{\partial u}{\partial x_n}$
should be understood as boundary values from the upper half space.
If $\mu$ is not of the form (\ref{dmufdx}), the system (\ref{overdetermined})
still holds with the last equation appropriately reformulated.

Returning to the general case, the components of $\nabla u$ are harmonic functions in $\Omega^+$.
By applying the maximum principle to $\frac{\partial u}{\partial x_n}$ it follows that
$$
u>0 \quad \text{in } \Omega^+,
$$
\begin{equation}\label{dudxnneg}
\frac{\partial u}{\partial x_n} <0 \quad \text{in } \Omega^+.
\end{equation}
Thus $u$ is strictly decreasing with respect to $x_n$, showing that
$(\partial\Omega)^+$ is the graph of a function, i.e., is of the
form $x_n =g(x')$ for some positive function $g$ defined in
$D=\Omega\cap\R^{n-1}$. With a more detailed
analysis one can show \cite{Friedman 82}, \cite{Caffarelli 98}, \cite{Gustafsson-Sakai 94}
that the function $g$ is actually
real analytic and tends to zero at $\partial D$.
It follows that we are in the setting of Sections~\ref{sec:diffgeom} and \ref{sec:innerball}.

The aim of the forthcoming analysis is to approach the following
conjecture.

\begin{conjecture}\label{conj:main}
Any subharmonic quadrature domain $\Omega$ as above satisfies
the equivalent conditions in Theorem~\ref{theorem1}.
\end{conjecture}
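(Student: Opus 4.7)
My plan is to attack Conjecture~\ref{conj:main} by a continuity method based on condition (i) of Theorem~\ref{theorem1}. Having already reduced to the case $d\mu = f\,dx_1\cdots dx_{n-1}$ with $f>0$ continuous on $D$, I introduce the deformation $\mu_t = tf\,dx_1\cdots dx_{n-1}$ for $t\in(0,1]$, with corresponding quadrature domain $\Omega_t$, height function $g_t$, and modified Schwarz potential $u_t$. Setting $\Phi_t(x')=\frac{1}{2}(|x'|^2+g_t(x')^2)$, the object of study is
\[
H_t(x') := \nabla^2 \Phi_t(x') = I + \nabla g_t(x') \otimes \nabla g_t(x') + g_t(x')\, \nabla^2 g_t(x'), \quad x'\in D_t := \Omega_t\cap \R^{n-1}.
\]
Let $J := \{t\in(0,1]\colon H_t \ge 0 \text{ on } D_t\}$; by Theorem~\ref{theorem1}(i) it suffices to prove $J=(0,1]$.

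For $t$ sufficiently small, both $g_t$ and $|\nabla g_t|$ are uniformly small ($\Omega_t$ degenerates to a thin neighborhood of $D$), so $H_t$ is a small perturbation of the identity and $(0,t_0]\subset J$ for some $t_0>0$. Closedness of $J$ in $(0,1]$ follows from the continuity of $g_t$ in $t$ (monotonicity of partial balayage combined with standard obstacle-problem regularity) together with the closed nature of the condition $H_t\ge 0$.

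The heart of the matter is openness. If $H_{t_\star}>0$ strictly on $\overline{D_{t_\star}}$ then a routine perturbation argument yields $(t_\star, t_\star+\varepsilon)\subset J$. The genuine difficulty is the case where $H_{t_\star}$ first develops a zero eigenvalue at some $x'_\star\in D_{t_\star}$ in a direction $\boldsymbol{\xi}$; geometrically, by the equivalence (i)$\Leftrightarrow$(vii), the ball $B(p(x'_\star),|N_{x'_\star}|)$ is tangent to $(\partial\Omega_{t_\star})^+$ to second order at $(x'_\star,g_{t_\star}(x'_\star))$. At such a critical configuration I would differentiate the overdetermined system \eqref{overdetermined} in $t$, obtaining a harmonic $\dot{u}_t$ in $\Omega_t^+$ with Neumann data on $D_t$ coming from $\dot\mu$ and with $\dot g_t$ determined on the free boundary by the resulting kinematic identity (a linear Hele-Shaw type problem). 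Combining this with term-by-term differentiation of $H_t$ yields a linear evolution for the degenerate mode, and the aim is to apply a strong maximum principle to exclude a first crossing of zero from above.

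The main obstacle will be this final step. The linearized Hele-Shaw operator is nonlocal and degenerate where $(\partial\Omega_t)^+$ meets the symmetry hyperplane; moreover, the critical direction $\boldsymbol{\xi}$ can couple with principal directions of the free boundary in ways that defeat a purely pointwise second-order argument, which is precisely the difficulty the authors describe when they report that the two-dimensional proof via the argument principle for $\sigma$ does not obviously extend. To bypass this, I would try to invoke the degree-theoretic information developed in Section~\ref{sec:boundaries}: at any putative first touching configuration the Brouwer degree of $\sigma$ supplied by Theorem~\ref{thm:main} should constrain the local topology of the foot-point map $p$ sharply enough to contradict the existence of such a tangency, mirroring how Corollary~\ref{cor:main} closes the two-dimensional argument. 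If this topological obstruction propagates to higher dimensions, it would complete the continuity argument without reliance on a delicate pointwise maximum-principle analysis.
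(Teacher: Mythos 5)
The statement you are attempting is Conjecture~\ref{conj:main}, which the paper \emph{does not prove}: it is explicitly left open for $n\ge 3$, and the paper only establishes it for $n=2$ (Corollary~\ref{cor:main}), otherwise developing partial tools (Theorem~\ref{thm:main}, Proposition~\ref{prop:gamma}) toward a future proof. So there is no proof in the paper to compare against; your proposal must be judged on its own merits as a candidate argument.

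Your continuity scheme is a reasonable frame, and the easy parts (nonemptiness of $J$ for small $t$, closedness of $J$, openness at a strictly positive $H_{t_\star}$) are plausible modulo standard regularity of $t\mapsto g_t$. But the proposal does not close the one step that matters, and the gap is exactly where the paper locates the difficulty. You plan to rule out a first degenerate tangency by invoking the degree computation in Theorem~\ref{thm:main}. A Brouwer degree of $(-1)^{n-1}$, however, controls only a \emph{signed} count of preimages; as the paper notes immediately after the theorem, a value on the $\sigma_n$-axis ``can also be attained several times with signatures $\dots$ adding up to $(-1)^{n-1}$.'' In two dimensions the argument closes because $\det J_\sigma$ has constant sign (after the coordinate switch $\sigma$ is holomorphic), which upgrades degree one to genuine univalence of $\sigma$ on $\gamma$ and hence to uniqueness of the closest boundary point. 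For $n>2$ that sign constraint fails -- the paper even records that at stationary points of $\frac12 r^2$ one has ${\rm rank\,}J_\sigma\le 1$ with all eigenvalues zero, but $J_\sigma$ need not vanish and no global sign of $\det J_\sigma$ is available. Thus the degree alone cannot preclude a degenerate tangency, a branching of $\gamma$, or multiple preimages, and gives no contradiction at your critical $t_\star$. Your concluding sentence (``if this topological obstruction propagates to higher dimensions\dots'') is the statement of the unsolved problem, not its resolution. As written, this is an outline of a possible attack, not a proof: the essential step (excluding the first crossing of zero by $H_t$, equivalently by Theorem~\ref{theorem1}(viii) excluding two distinct closest boundary points) is asserted as a hope rather than demonstrated, and the tool you lean on is demonstrably too weak by itself to supply it.
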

As have already been remarked, Conjecture~\ref{conj:main} has been
settled in the case $n=2$. Two completely different proofs appear in
\cite{Gustafsson-Sakai 03} and \cite{Gustafsson-Sakai 04}. A third proof, perhaps
somewhat related to the one in \cite{Gustafsson-Sakai 04}, will be given in the
present paper (Corollary~\ref{cor:main}).


\subsection{The function $\rho$ and the $2$-form $\omega$}

Let $(r, \theta)$ be spherical coordinates in $\R^n$ in the sense that
$r =|\x|=\sqrt{x_1^2+\dots+x_n^2}$, $\theta=\frac{\x}{|\x|}\in S^{n-1}$
(if $\x\ne 0$).
Then $r \frac{\partial }{\partial r}=\x\cdot \mathbf{\nabla}
=x_1 \frac{\partial }{\partial x_1} +\dots+ x_n\frac{\partial }{\partial x_n}$.
In order to analyse the curvature of $(\partial\Omega)^+$ we
introduce a function $\rho$ and a 2-form $\omega$ according to

\begin{align}
\rho &= \frac{1}{2} r^2  - r \frac{\partial u}{\partial r} - (n-2)u, \label{rho}\\
\omega &= rdr \wedge du = d(\frac{1}{2}r^2 du).  \notag
\end{align}
Thus,
$$
\omega =\sum_{i<j} \omega_{ij} dx_i \wedge dx_j =\sum_{i,j=1}^n \omega_{ij} dx_i \otimes dx_j
$$
($dx_i \wedge dx_j=dx_i \otimes dx_j -dx_j \otimes dx_i$) with
\begin{equation}\label{omegaxdudx}
\omega_{ij}= x_i\frac{\partial u}{\partial x_j}-x_j\frac{\partial u}{\partial x_i}.
\end{equation}
Note that the restriction of $\rho$
to ${(\partial\Omega)^+}$ coincides with
the function $\Phi$ in Subsection~\ref{Phi}
when the latter is regarded as a function of $x'=(x_1,\dots, x_{n-1})$: with $(\partial\Omega)^+$
given by $x_n=g(x')$ we have
$$
\Phi (x')= \rho (x', g(x')).
$$

Recall that the Hodge star operator \cite{Frankel 97} is
defined on basic forms
$$
\alpha =dx_{i_1}\wedge\dots\wedge dx_{i_p}
$$
as
$$
\ast\alpha = \pm dx_{i_{p+1}}\wedge\dots\wedge dx_{i_n},
$$
where the sign is chosen so that
$$
\alpha\wedge\ast\alpha = dx_1\wedge\dots\wedge dx_n = dx
\qquad \text{(the volume form)}
$$
and $(i_1,\dots ,i_n)$ denotes any permutation of $(1,\dots,n)$.
Interior multiplication of a $p$-form $\alpha$ with a vector field
${\boldsymbol{\xi}}$ is the $(p-1)$-form obtained by letting ${\boldsymbol{\xi}}$
occupy the first position when $\alpha$ is viewed as an antisymmetric
operator acting on $p$ vector fields:
$$
(i({\boldsymbol{\xi}})\alpha )({\boldsymbol{\xi}_2},\dots, {\boldsymbol{\xi}_p})
=\alpha ({\boldsymbol{\xi}},{\boldsymbol{\xi}_2},\dots, {\boldsymbol{\xi}_p}).
$$
We shall also need the Lie derivative
$L_{\boldsymbol{\xi}}$ with respect
to a vector field $\boldsymbol{\xi}$, and recall that its action on any
$p$-form $\alpha$ is
$$
L_{\boldsymbol{\xi}}(\alpha )=i(\boldsymbol{\xi} ) d\alpha + d( i(\boldsymbol{\xi}) \alpha),
$$
and that its action on a vector field $\boldsymbol{\eta}$ is the commutator
$$
L_{\boldsymbol{\xi}} \boldsymbol{\eta} =[\boldsymbol{\xi}, \boldsymbol{\eta}],
$$
when all vector fields are considered as differential operators
($\boldsymbol{\xi}$ corresponds to the directional derivative $\boldsymbol{\xi}\cdot \nabla$ etc.).

In our Euclidean setting of differential geometry there are natural one-to-one correspondences
between vector fields, $1$-forms and $(n-1)$-forms.
The basis elements correspond to each other according to
$$
{\bf e}_i \leftrightarrow dx_i \leftrightarrow  \ast dx_i,
$$
and the forms/vectors simply keep their coefficients with respect to the
above bases under the identifications. This means for example that
if $\mathbf{a}, \mathbf{b}$ are vectors, $\alpha$ a $1$-form
and $\beta$ an $(n-1)$-form, then
$$
\mathbf{a}\leftrightarrow \alpha \quad \text{if and only if}\quad i(\mathbf{a}) dx =\ast \alpha,
$$
$$
\mathbf{b}\leftrightarrow \beta \quad \text{if and only if}\quad i(\mathbf{b}) dx =\beta.
$$
The definition of $\omega$ can be expressed in terms of interior
multiplication and the Hodge star operator as
$$
\ast\omega= -i(r\frac{\partial}{\partial r}) ({\ast du}).
$$


\subsection{The Cauchy-Riemann system}

The fundamental relationship between $\rho$ and $\omega$ is
expressed in the following Cauchy-Riemann type system.

\begin{theorem}
We have
\begin{equation}\label{CR}
\ast d\rho =d(\ast \omega) \quad {\rm in}\quad \Omega^+.
\end{equation}
Written out in components this is
\begin{equation}\label{components}
\frac{\partial\rho}{\partial x_k}
=\sum_{j=1}^n \frac{\partial\omega_{kj}}{\partial x_j} \quad (k=1,\dots ,n).
\end{equation}
\end{theorem}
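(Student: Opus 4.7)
The plan is to verify the componentwise form \eqref{components} by direct calculation; the coordinate-free statement $\ast d\rho = d(\ast\omega)$ then follows. The equivalence of the two forms is a matter of Hodge bookkeeping: identifying an $(n-1)$-form with a vector field via $\mathbf{v}\mapsto i(\mathbf{v})\,dx$, one checks from the definition of $\ast$ on basis $1$- and $2$-forms that $\ast d\rho$ corresponds to $\nabla\rho$, while $d(\ast\omega)$ corresponds to the vector field whose $k$-th component is $\sum_j \partial\omega_{kj}/\partial x_j$.

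For the componentwise identity, I would first differentiate \eqref{rho}. Using the Euler identity $r\,\partial u/\partial r = \sum_j x_j\,\partial u/\partial x_j$, one obtains
$$
\frac{\partial\rho}{\partial x_k} = x_k - (n-1)\frac{\partial u}{\partial x_k} - \sum_{j=1}^n x_j\frac{\partial^2 u}{\partial x_j\partial x_k},
$$
where the factor $(n-1)$ is the sum of the $1$ produced when $\partial/\partial x_k$ hits the factor $x_k$ in $\sum_j x_j\,\partial u/\partial x_j$ and the $(n-2)$ from the explicit term $-(n-2)u$ in $\rho$. Next, applying the product rule to $\omega_{kj} = x_k\,\partial u/\partial x_j - x_j\,\partial u/\partial x_k$ and summing over $j$ yields four terms,
$$
\sum_{j=1}^n \frac{\partial\omega_{kj}}{\partial x_j} = \frac{\partial u}{\partial x_k} + x_k\,\Delta u - n\,\frac{\partial u}{\partial x_k} - \sum_{j=1}^n x_j\frac{\partial^2 u}{\partial x_j\partial x_k},
$$
and the overdetermined system \eqref{overdetermined}, specifically $\Delta u = 1$ in $\Omega^+$, replaces $x_k\,\Delta u$ by $x_k$. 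The two expressions then coincide term by term.

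The calculation presents no real obstacle, and the content of the theorem is essentially algebraic: the choice of coefficient $(n-2)$ in \eqref{rho} is forced by the dimension-dependent constants that arise from the divergence in \eqref{components}, while the single analytic input is the PDE $\Delta u = 1$ on $\Omega^+$. The verification thus simultaneously establishes the identity and explains the (otherwise somewhat ad hoc) definition of $\rho$. A more invariant derivation could be organized around Cartan's formula applied to $r\,\partial/\partial r$, exploiting $\omega = d(\tfrac{1}{2}r^2\,du)$ and $d\ast du = dx$, but the direct componentwise route above seems to be the shortest.
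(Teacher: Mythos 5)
Your proof is correct and follows essentially the same route as the paper: both verify the componentwise identity \eqref{components} by directly differentiating $\rho$ and $\omega_{kj}$, recombining the resulting terms, and invoking $\Delta u = 1$ in $\Omega^+$, with the coordinate-free form \eqref{CR} then following from standard Hodge-star bookkeeping. The only cosmetic difference is that the paper separates the $j=k$ term from the $j\neq k$ terms when summing $\partial\omega_{kj}/\partial x_j$, whereas you carry the Kronecker contribution along uniformly; the arithmetic is identical.
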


\begin{proof}
It is easy to check that \eqref{CR}, when spelled out, simply
becomes \eqref{components}, so we just verify
\eqref{components}: Differentiating \eqref{omegaxdudx} we get
$$
\frac{\partial\omega_{kj}}{\partial x_j}=x_k \frac{\partial^2 u}{\partial x_j^2}
-\frac{\partial u}{\partial x_k}-x_j \frac{\partial^2 u}{\partial x_j \partial x_k}
$$
when $j\ne k$. For $j=k$ there is an additional term $\frac{\partial u}{\partial x_j}$,
which cancels the term $-\frac{\partial u}{\partial x_k}$. Summing over $j$ therefore
gives
$$
\sum_{j=1}^n \frac{\partial\omega_{kj}}{\partial x_j} = x_k \Delta u
-(n-1)\frac{\partial u}{\partial x_k} - r\frac{\partial}{\partial r}(\frac{\partial u}{\partial x_k}).
$$
On the other hand, differentiating \eqref{rho} gives
$$
\frac{\partial\rho}{\partial x_k}=x_k -(n-2)\frac{\partial u}{\partial x_k}
-  \frac{\partial }{\partial x_k} (r\frac{\partial u}{\partial r}).
$$
By a change of order of differentiating, and in view of $\Delta u =1$ in $\Omega^+$,
\eqref{components} now follows.
\end{proof}

Introducing the coexterior derivative $\delta$, defined on $p$-forms by
$$
\delta = (-1)^{n(p+1)+1}\ast d\ast,
$$
the system \eqref{CR} can be written
\begin{equation}\label{drhodeltaomega}
d\rho =\delta \omega.
\end{equation}
Trivially $\delta \rho =0$, and since $\omega$ by definition is exact,
$d\omega=0$. Recall that the Hodge Laplacian is
$$
\Delta =-(\delta d + d\delta)
$$
and that it in our Euclidean setting simply equals the ordinary Laplacian
acting on the coefficients of the forms it is applied to.
From (\ref{drhodeltaomega}) we find (using $\delta \circ\delta =0$ etc.)
\begin{corollary}\label{rhoharmonic}
$$
\begin{cases}
\delta \rho =0,\\
\delta d \rho =0,
\end{cases}
\quad
\begin{cases}
d \omega =0,\\
d\delta \omega =0,
\end{cases}
$$
in particular
$$
\Delta \rho =0,\quad
\Delta  \omega =0.
$$
\end{corollary}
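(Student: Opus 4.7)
The corollary is essentially formal once the Cauchy--Riemann identity \eqref{drhodeltaomega} is in hand, so my plan is simply to string together three inputs: (a) $d\rho = \delta\omega$, (b) $\omega$ is exact by definition of $\omega = d(\tfrac{1}{2}r^2 du)$, and (c) the standard nilpotencies $d\circ d = 0$ and $\delta\circ\delta = 0$. No analysis is needed; the overdetermined system \eqref{overdetermined} has already been absorbed into the proof of the preceding theorem.

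I would first dispose of the two ``free'' identities. Since $\rho$ is a $0$-form, $*\rho$ is an $n$-form on the $n$-dimensional manifold $\Omega^+$, so $d(*\rho)$ is an $(n+1)$-form and vanishes identically; hence $\delta\rho = \pm *d*\rho = 0$. Similarly, the defining formula $\omega = d(\tfrac{1}{2}r^2\, du)$ exhibits $\omega$ as an exact form, so $d\omega = d^2(\tfrac{1}{2}r^2\, du) = 0$. These two identities require no relation between $\rho$ and $\omega$ at all.

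Next I would use the Cauchy--Riemann identity \eqref{drhodeltaomega} to chain the nilpotencies. Applying $\delta$ to both sides of $d\rho = \delta\omega$ gives
\[
\delta d\rho = \delta(\delta\omega) = 0,
\]
and applying $d$ to both sides gives
\[
d\delta\omega = d(d\rho) = 0.
\]
This completes the four box-identities in the corollary. Finally, since the Hodge Laplacian decomposes as $\Delta = -(\delta d + d\delta)$, I add the pieces:
\[
\Delta\rho = -(\delta d\rho + d\delta\rho) = 0, \qquad \Delta\omega = -(\delta d\omega + d\delta\omega) = 0.
\]

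There is no real obstacle here; the only thing to watch is the convention that $\delta$ annihilates $0$-forms (which is automatic by a dimension count, not by anything harmonic). The content of the corollary is that the Cauchy--Riemann system \eqref{drhodeltaomega} mimics the classical planar situation where the real and imaginary parts of a holomorphic function are harmonic, and in that analogy $\rho$ plays the role of a real part and $\omega$ of an imaginary part — a picture that should guide the later use of $\rho$ and $\omega$ in studying the mapping degree of $\sigma$.
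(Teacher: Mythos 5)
Your proof is correct and follows exactly the argument the paper intends: the paper states the corollary follows ``from (\ref{drhodeltaomega})\dots using $\delta\circ\delta=0$ etc.,'' and you have simply spelled out those steps ($\delta\rho=0$ by degree count, $d\omega=0$ by exactness, then applying $\delta$ and $d$ to the Cauchy--Riemann identity, and finally the Hodge Laplacian decomposition). Nothing is missing and nothing is superfluous.
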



\subsection{The vector field $\boldsymbol{\xi}$}

Next we introduce the vector field
\begin{align*}
\boldsymbol{\xi} =\nabla \rho &= {\bf x}-(n-2)\nabla u-\nabla ({\bf x}\cdot \mathbf{\nabla} u) \\
&={\bf x}-(n-1 + r\frac{\partial}{\partial r})\nabla u.
\end{align*}
For any function $\varphi$,
$$
i(\nabla \varphi) dx= \ast d\varphi,
$$
hence  the Cauchy-Riemann system \eqref{CR} can be written in terms
of $\boldsymbol{\xi}$ and $\omega$ as
$$
i(\boldsymbol{\xi})dx =d(\ast \omega).
$$
Immediate consequences, in view of $i(\boldsymbol{\xi}) \circ i(\boldsymbol{\xi})=0$, are

$$
i(\boldsymbol{\xi} ) d(\ast\omega) =0,
$$
\begin{equation}\label{Lie}
L_{\boldsymbol{\xi}} (\ast \omega)= d(i(\boldsymbol{\xi})(\ast\omega)).
\end{equation}
When $n=2$, $i(\boldsymbol{\xi})(\ast\omega)=0$ (because it is an $(n-3)$-form),
hence $L_{\boldsymbol{\xi}} (\ast \omega)=0$.

We proceed with a geometric interpretation of $\boldsymbol{\xi}$ on
$(\partial\Omega)^+$.

\begin{proposition}\label{projection}
At any point ${\bf x}\in (\partial\Omega)^+$,
$\boldsymbol{\xi}=\nabla \rho$
equals the orthogonal projection of the vector
${\bf x}$
onto the tangent space of $(\partial\Omega)^+$
at ${\mathbf x}$. In particular, $\boldsymbol{\xi}$ is tangent to $(\partial\Omega)^+$,
i.e., $\frac{\partial \rho}{\partial n}=0$  on $(\partial\Omega)^+$.
\end{proposition}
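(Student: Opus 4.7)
The plan is to evaluate the explicit formula
$$
\boldsymbol{\xi} = \mathbf{x} - (n-1)\nabla u - (\mathbf{x}\cdot\nabla)\nabla u
$$
at a point of $(\partial\Omega)^+$, exploiting the boundary conditions in the overdetermined system \eqref{overdetermined}. Since $\nabla u = 0$ on $(\partial\Omega)^+$, the middle term drops out immediately, so the real work is to show that
$$
(\mathbf{x}\cdot\nabla)\nabla u\big|_{(\partial\Omega)^+} = (\mathbf{x}\cdot\mathbf{n})\,\mathbf{n},
$$
where $\mathbf{n}$ is the unit normal. Once this is established, we obtain $\boldsymbol{\xi}|_{(\partial\Omega)^+} = \mathbf{x} - (\mathbf{x}\cdot\mathbf{n})\mathbf{n}$, which is by definition the tangential projection of $\mathbf{x}$, and $\partial\rho/\partial n = \mathbf{n}\cdot\boldsymbol{\xi} = 0$ follows.

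To compute $(\mathbf{x}\cdot\nabla)\nabla u$ on $(\partial\Omega)^+$, I would first decompose $\mathbf{x} = \mathbf{x}_{\mathrm{tan}} + (\mathbf{x}\cdot\mathbf{n})\mathbf{n}$. Because $\nabla u$ vanishes identically along the hypersurface $(\partial\Omega)^+$, its derivative in any direction tangent to the boundary must vanish pointwise on the boundary, so $(\mathbf{t}\cdot\nabla)\nabla u = 0$ on $(\partial\Omega)^+$ for every tangent vector $\mathbf{t}$. This kills the $\mathbf{x}_{\mathrm{tan}}$ part and reduces the problem to evaluating $(\mathbf{n}\cdot\nabla)\nabla u$ on the boundary, i.e.\ to identifying the Hessian $\mathrm{Hess}(u)$ at boundary points.

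By the same tangential-derivative argument, $\mathrm{Hess}(u)(\mathbf{t},\mathbf{w}) = 0$ on $(\partial\Omega)^+$ whenever $\mathbf{t}$ is tangential and $\mathbf{w}$ arbitrary (viewing the Hessian as the symmetric matrix $\partial_j\partial_i u$). Thus at each boundary point $\mathrm{Hess}(u)$ is a scalar multiple of the rank-one matrix $\mathbf{n}\mathbf{n}^T$. The scalar is pinned down by the equation $\Delta u = 1$ in $\Omega^+$ (interpreted as a one-sided limit from inside): taking the trace gives the scalar $=1$, so $\mathrm{Hess}(u) = \mathbf{n}\mathbf{n}^T$ on $(\partial\Omega)^+$. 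Consequently $(\mathbf{n}\cdot\nabla)\nabla u = \mathrm{Hess}(u)\,\mathbf{n} = \mathbf{n}$, whence $(\mathbf{x}\cdot\nabla)\nabla u = (\mathbf{x}\cdot\mathbf{n})\mathbf{n}$ as claimed, completing the proof.

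There is no serious obstacle here; the only care required is to interpret all derivatives on $(\partial\Omega)^+$ as one-sided limits from $\Omega^+$ (so that $\Delta u = 1$ is available), and to notice that the vanishing of $\nabla u$ on $(\partial\Omega)^+$ automatically kills all tangential derivatives of the vector field $\nabla u$ on that hypersurface, so that only the normal component of $\mathbf{x}$ enters the computation of $(\mathbf{x}\cdot\nabla)\nabla u$.
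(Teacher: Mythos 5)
Your proof is correct, but it takes a genuinely different route from the paper's. The paper establishes tangentiality of $\boldsymbol{\xi}$ abstractly from the Cauchy--Riemann identity $i(\boldsymbol{\xi})dx = d(\ast\omega)$ together with the fact that $\ast\omega$ vanishes on $(\partial\Omega)^+$, and then identifies $\boldsymbol{\xi}$ with the tangential projection of $\mathbf{x}$ by noting that $\mathbf{x} - \boldsymbol{\xi} = \nabla(\mathbf{x}\cdot\nabla u + (n-2)u)$ is normal, since the quantity $\mathbf{x}\cdot\nabla u + (n-2)u$ vanishes identically along $(\partial\Omega)^+$. No Hessian calculation is needed. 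Your argument instead computes the full Hessian $H = \nabla^2 u$ on the boundary: $\nabla u=0$ on $(\partial\Omega)^+$ forces $H$ to annihilate all tangential directions (so $H$ is a multiple of $\mathbf{n}\mathbf{n}^{T}$ by symmetry), and $\Delta u = 1$ fixes the multiple, giving $H = \mathbf{n}\mathbf{n}^{T}$; from there $(\mathbf{x}\cdot\nabla)\nabla u = (\mathbf{x}\cdot\mathbf{n})\mathbf{n}$ and the conclusion follows. Each step is sound, and you are right that the derivatives must be read as one-sided limits from $\Omega^+$ (the Hessian is discontinuous across $(\partial\Omega)^+$ since $u\equiv 0$ outside). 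Worth noting: the identity $H\boldsymbol{\eta} = (\boldsymbol{\eta}\cdot\mathbf{n})\mathbf{n}$ on $(\partial\Omega)^+$ is derived in the paper itself, in the later subsection on the Hessian, using essentially your argument (there phrased as $H\boldsymbol{\eta}=\nabla(\boldsymbol{\eta}\cdot\nabla u)$, which is normal because $\boldsymbol{\eta}\cdot\nabla u$ vanishes on the boundary). So your proof of the proposition amounts to pulling that Hessian computation forward and using it directly; the paper's own proof of this proposition is the more concise one, sidestepping the Hessian entirely in favor of the differential-form identity, while yours is more hands-on and gives the pointwise structure of $\nabla^2 u$ on the free boundary as a byproduct.
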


\begin{proof}
Since all components of $\ast\omega$ vanish on $(\partial\Omega)^+$, the $(n-1)$-form
$i(\boldsymbol{\xi})dx =d(\ast \omega)$ vanishes along $(\partial\Omega)^+$
(i.e., its integral over any piece of $(\partial\Omega)^+$ is zero).
This means exactly that $\boldsymbol{\xi}$ is tangent to $(\partial\Omega)^+$.
But $\boldsymbol{\xi}={\bf x}-\nabla ({\bf x}\cdot \nabla u -(n-2) u)$
and $\mathbf{\nabla}({\bf x}\cdot \nabla u +(n-2) u)$
is orthogonal to $(\partial\Omega)^+$ because
${\bf x}\cdot \nabla u +(n-2) u =0 $ on $(\partial\Omega)^+$.
It follows that $\boldsymbol{\xi}$ is the projection of ${\bf x}$
onto the tangent space of $(\partial\Omega)^+$.

\end{proof}

\begin{corollary}
{A point} ${\bf x} = (x',x_n)\in (\partial\Omega)^+$ {is stationary for} $\frac{1}{2}r^2$ {on} $(\partial\Omega)^+$
equivalently, $x'$ {is stationary for} $\Phi${, if and only if}
$\boldsymbol{\xi}=0$ {at} ${\bf x}$.
\end{corollary}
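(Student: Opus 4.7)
The plan is to apply Proposition~\ref{projection} almost directly. First I would observe that on $(\partial\Omega)^+$ one has both $u=0$ and $|\nabla u|=0$, and in particular $r\frac{\partial u}{\partial r}=\mathbf{x}\cdot\nabla u=0$ there. Substituting into the definition \eqref{rho} of $\rho$ gives $\rho|_{(\partial\Omega)^+}=\tfrac12 r^2$. Consequently the tangential gradients of $\rho$ and of $\tfrac12 r^2$ along $(\partial\Omega)^+$ coincide, so $\mathbf{x}$ is stationary for $\tfrac12 r^2$ on the hypersurface if and only if the tangential component of $\nabla(\tfrac12 r^2)=\mathbf{x}$ vanishes at $\mathbf{x}$.

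Next, Proposition~\ref{projection} identifies $\boldsymbol{\xi}=\nabla\rho$ with exactly that orthogonal projection of $\mathbf{x}$ onto the tangent space of $(\partial\Omega)^+$. Hence $\boldsymbol{\xi}=0$ at $\mathbf{x}$ is equivalent to $\mathbf{x}$ being stationary for $\tfrac12 r^2$ on $(\partial\Omega)^+$, which is the first half of the claim.

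For the equivalence with stationarity of $\Phi$ at $x'$, I would use that the map $x'\mapsto(x',g(x'))$ is a diffeomorphism from $D$ onto $(\partial\Omega)^+$ under which, by the identity $\Phi(x')=\rho(x',g(x'))$ recorded just after \eqref{omegaxdudx}, the pullback of $\tfrac12 r^2|_{(\partial\Omega)^+}$ is precisely $\Phi$. Stationary points therefore correspond. As a sanity check (or alternative argument), a direct computation gives $\nabla\Phi(x')=x'+g(x')\nabla g(x')$, which vanishes exactly when $\mathbf{x}=(x',g(x'))$ is orthogonal to each basis tangent vector $(e_i,\partial_i g)$ of $(\partial\Omega)^+$, i.e., when $\mathbf{x}$ is normal to the hypersurface, which again means $\boldsymbol{\xi}=0$.

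There is no real obstacle here: the corollary is essentially a restatement of Proposition~\ref{projection} once one notes that $\rho$ and $\tfrac12 r^2$ agree on the free boundary. The only minor point worth highlighting in a careful write-up is the double vanishing $u=0$, $\nabla u=0$ on $(\partial\Omega)^+$ needed to eliminate both the $r\partial_r u$ and the $(n-2)u$ terms from $\rho$.
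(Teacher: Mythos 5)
Your proposal is correct and matches the route the paper clearly intends: the corollary is stated as an immediate consequence of Proposition~\ref{projection}, and you apply that proposition together with $\nabla(\tfrac12 r^2)=\mathbf{x}$ and the identity $\Phi(x')=\rho(x',g(x'))$ exactly as the paper's setup suggests. The remark about $u=0$ and $\nabla u=0$ both being needed on $(\partial\Omega)^+$ to reduce $\rho$ to $\tfrac12 r^2$ there is a good small point of care.
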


\begin{example}
In two dimensions, with the identification $\R^2=\C$ and using $(x, y)$ as coordinates
(and with $z=x+\I y$) we can define the Schwarz function in $\Omega^+$ by
$$
S(z) = \bar{z}-4\frac{\partial u}{\partial z}.
$$
It is holomorphic in  $\Omega^+$ and extends continuously to $(\partial \Omega)^+$ with
$$
S(z)= \bar{z} \quad \text{on} \quad (\partial \Omega)^+.
$$
In addition, by \eqref{overdetermined},
$$
\im S(x+i0) = f(x) \quad \text{on}\quad \partial(\Omega^+)\cap \R.
$$

In the two dimensional case $\ast\omega$ is a $0$-form, i.e., a function, and it is exactly the conjugate
harmonic function of $\rho$. More precisely, $\rho$, $\ast\omega$ are the real and imaginary parts of the analytic function
$\frac{1}{2} zS(z)$:
\begin{equation}\label{sigma}
\frac{1}{2} zS(z) = \frac{1}{2} r^2 -r\frac{\partial u}{\partial r}
+ \I \frac{\partial u}{\partial \varphi}
= \rho +\I \ast\omega.
\end{equation}

\end{example}

\begin{example}
In the case of three dimensions we can identify
the $2$-form $\omega$ with the vector field
$$
\boldsymbol{\omega} = {\bf x} \times {\bf \nabla} u
=  \curl (\frac{1}{2}r^2 \nabla u).
$$
The Cauchy-Riemann system  $\ast d\rho =d(\ast\omega)$ then takes the form
$$
\nabla \rho =\curl \boldsymbol{\omega}.
$$
In terms of $\boldsymbol{\xi}=\nabla \rho$ and $\boldsymbol{\omega}$ we thus have the equations
$$
\begin{cases}
\di \boldsymbol{\omega} =0,\\
\curl \boldsymbol{\xi} =0,\\
\boldsymbol{\xi} =\curl \boldsymbol{\omega}.
\end{cases}
$$
In particular $\di \boldsymbol{\xi}=0$.
Note the triple curl identity $\curl\curl\curl (\frac{1}{2} r^2\nabla u)=0$.

Notice also that the vector field $\boldsymbol{\omega}$ is everywhere tangent
to the family of spheres $|\mathbf{x}|={\rm constant}$. Thus, in spherical
coordinates $(r, \theta, \varphi)$, where $\theta$ (now) is the angle between the
vector $\mathbf{x}$ and the $x_3$-axis and $\varphi$ is the polar angle for the
projection of $\mathbf{x}$ onto the $(x_1,x_2)$-plane, on writing
$$
\boldsymbol{\omega}= \omega_r \mathbf{{e}}_r
+\omega_\theta \mathbf{{e}}_\theta
+\omega_\varphi \mathbf{{e}}_\varphi
$$
we have $\omega_r=0$. On $D=\Omega \cap \R^2$, i.e., for $\theta
=\frac{\pi}{2}$
we have $\omega_\varphi = -r\frac{\partial u}{\partial x_3} = \frac{1}{2}r{f(x')}>0$ because $f > 0$ on $D$.

For the interior multiplication,
\begin{align*}
i(\boldsymbol{\xi})(\ast\omega) = \boldsymbol{\omega}\cdot\boldsymbol{\xi}
&=(\mathbf{x}\times \nabla u)\cdot (\mathbf{x}-\nabla(\mathbf{x}\cdot \nabla u)- \nabla u) \\
&=-(\mathbf{x}\times \nabla u)\cdot \nabla (\mathbf{x}\cdot \nabla u).
\end{align*}
As to the Lie derivatives of $\boldsymbol{\omega}$ (vector), $\omega$
($1$-form), $\ast\omega$ ($2$-form) we have,
when the results are identified with vector fields,
\begin{align*}
L_{\boldsymbol{\xi}} ( \boldsymbol{\omega} )&=[\boldsymbol{\xi},\boldsymbol{\omega}]
=(\boldsymbol{\xi} \cdot\nabla) \boldsymbol{\omega} - (\boldsymbol{\omega}\cdot\nabla )\boldsymbol{\xi}, \\
L_{\boldsymbol{\xi}} (\omega) &=\text{ as a vector }= \curl (\boldsymbol{\omega} \times \boldsymbol{\xi})
=(\boldsymbol{\xi} \cdot\nabla) \boldsymbol{\omega} - (\boldsymbol{\omega}\cdot\nabla )\boldsymbol{\xi}, \\
L_{\boldsymbol{\xi}} (\ast\omega) &=\text{ as a vector }
={\nabla}(\boldsymbol{\xi}\cdot\boldsymbol{\omega})
=(\boldsymbol{\xi} \cdot\nabla) \boldsymbol{\omega} + (\boldsymbol{\omega}\cdot\nabla )\boldsymbol{\xi}.
\end{align*}

\end{example}

\bigskip

We now return to the general case.
Since, by \eqref{dudxnneg}, $\nabla u\ne 0$ in $\Omega^+$, the integral curves of $\nabla u$
constitute a smooth (even real analytic) family of curves which fill up
$\Omega^+$. The curves are directed downwards ($\nabla u\cdot {\bf{e}}_n<0$),
start immediately inside $(\partial\Omega)^+$ (recall that $\nabla u =0$ on $(\partial\Omega)^+$)
and end up at points of $\supp \mu$. Thus a subdomain $R\subset \Omega^+$ which is
bounded by integral curves of $\nabla u$ may be thought
of as a tube going from $(\partial\Omega)^+$  to $D\subset\R^{n-1}$.

\begin{proposition}
Let $R\subset\Omega^+$ be a domain such that $\partial R\cap \Omega^+$ is smooth and consists of
integral curves of $\nabla u$.
Then, assuming $d\mu =fdx_1\dots dx_{n-1}$ with $f$ continuous on $D$,
$$
m (R)
=\frac{1}{2}\mu( \partial{R}\cap \R^{n-1}),
$$
$m$ denoting Lebesgue measure.
\end{proposition}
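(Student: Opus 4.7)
The plan is to prove the identity by a direct application of the divergence theorem to $u$ on the region $R$, using $\Delta u = 1$ in $\Omega^+$ together with the overdetermined boundary conditions in \eqref{overdetermined}.

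First, I would decompose $\partial R$ into three pieces according to where they sit: a piece $\Gamma_1 = \partial R \cap (\partial\Omega)^+$, a piece $\Gamma_2 = \partial R \cap \Omega^+$ (the integral curves of $\nabla u$ given by hypothesis), and a piece $\Gamma_3 = \partial R \cap \R^{n-1}$. Since $R \subset \Omega^+$ and $\partial \Omega^+ \subset (\partial\Omega)^+ \cup \overline{D}$, these three pieces exhaust $\partial R$ up to a set of $(n-1)$-dimensional measure zero.

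Next, since $\mu$ is supported in $\R^{n-1}$, system \eqref{Deltau} gives $\Delta u = 1$ in $\Omega^+$, so the divergence theorem yields
\begin{equation*}
m(R) = \int_R \Delta u \, dm = \int_{\partial R} \nabla u \cdot \nu \, d\sigma
= \sum_{k=1}^3 \int_{\Gamma_k} \nabla u \cdot \nu \, d\sigma,
\end{equation*}
where $\nu$ is the outward unit normal to $R$. The contribution from $\Gamma_1$ vanishes because $\nabla u = 0$ on $(\partial\Omega)^+$ (as noted after \eqref{Deltau}). The contribution from $\Gamma_2$ vanishes as well: along an integral curve of $\nabla u$ the tangent direction is parallel to $\nabla u$, so the outward normal $\nu$ to the tube $R$ is orthogonal to $\nabla u$, giving $\nabla u \cdot \nu = 0$ pointwise on $\Gamma_2$.

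Finally, on $\Gamma_3$ the outward normal (from $R \subset (\R^n)^+$) is $\nu = -\mathbf{e}_n$, and the boundary condition $-2\,\partial u/\partial x_n = f$ on $D$ from \eqref{overdetermined} (with the derivative taken as boundary values from above) gives $\nabla u \cdot \nu = -\partial u/\partial x_n = f/2$. Hence
\begin{equation*}
m(R) = \int_{\Gamma_3} \tfrac{1}{2} f \, dx_1 \cdots dx_{n-1} = \tfrac{1}{2}\, \mu(\partial R \cap \R^{n-1}),
\end{equation*}
using $d\mu = f\, dx_1 \cdots dx_{n-1}$. There is no real obstacle here; the only subtle point is the justification of the divergence theorem near $(\partial\Omega)^+$, which is handled by the real-analyticity of $g$ (so $\Gamma_1$ is a smooth hypersurface), and the tacit understanding that $\Gamma_2$ is smooth by assumption and $\Gamma_3 \subset D$ away from $\partial D$.
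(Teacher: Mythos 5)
Your proposal is correct and follows essentially the same route as the paper: apply the divergence theorem to $\int_R \Delta u\,dm$, split $\partial R$ into the three pieces on $(\partial\Omega)^+$, in $\Omega^+$, and on $\R^{n-1}$, kill the first two contributions using $\nabla u=0$ on $(\partial\Omega)^+$ and tangency of $\nabla u$ along the tube walls, and evaluate the last using $-2\,\partial u/\partial x_n=f$ on $D$. Your added remarks on smoothness and the exhaustion of $\partial R$ are reasonable bookkeeping that the paper leaves implicit.
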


\begin{proof}
By assumption, $\nabla u\cdot {\bf{n}}=0$ on $\partial R\cap \Omega^+$ (${\bf{n}}$ the normal unit vector on $\partial R$).
Therefore,
\begin{align*}
m(R) &= \int_R \Delta u dx
=\int_{\partial R} \nabla u\cdot {\bf {n}}d\sigma \\
&=\int_{ \partial R\cap\R^{n-1}} \nabla u\cdot {\bf {n}}d\sigma
+\int_{\partial R\cap \Omega^+} \nabla u\cdot {\bf {n}}d\sigma
+\int_{\partial R\cap (\partial\Omega)^+} \nabla u\cdot {\bf {n}}d\sigma \\
&=\int_{ \partial R\cap\R^{n-1}} (-\frac{\partial u}{\partial x_n}) dx_1\dots dx_{n-1} +0+0
=\frac{1}{2}\mu( \partial{R}\cap\R^{n-1}).
\end{align*}

\end{proof}


\subsection{The curve $\gamma$ and the map $\sigma$}\label{sec:gammasigma}

Next we shall study the set where $\omega$ vanishes:
$$
\gamma=\{x\in\Omega^+ : \omega (x)=0 \}.
$$

The vanishing of $\omega$ at a point $x$ means by definition that
all components of $\omega$ vanish. Note that, since $rdr\ne 0$ and
$du\ne 0$ in $\Omega^+$, $\omega$ vanishes if and only if $rdr$ and
$du$ are proportional, i.e., if and only if $\nabla u$ is parallel
to the vector from $x$ to the origin, and hence points towards the origin. This is
also the same as saying that the projection
$$
\nabla_{\partial B_r} u= \nabla u -(\nabla u \cdot {\bf e}_r){\bf e}_r
$$
of $\nabla u$ onto the tangent plane of the sphere $\partial B (0,r)$
vanishes. Hence
\begin{align}
\gamma &=\{x\in\Omega^+ : -\nabla u (x) \,\,\text{is parallel to the
vector } \x \} \label{gamma}\\
&=\{ x\in\Omega^+ : \nabla_{\partial B_r} u(x) =0\}. \notag
\end{align}
Recall that $\x$ denotes the vector from the origin to the point $x$.

A possible proof of Conjecture~\ref{conj:main} may be based on establishing
that $\gamma$ is a smooth curve, when nonempty. The strategy would then
be as follows. Suppose $\Omega$ does not satisfy the inner ball
condition, i.e., that the equivalent conditions in
Theorem~\ref{theorem1} do not hold. Then there exists by condition
(viii) a point $c=(a,b)\in\Omega^+$ which has at least two closest
neighbors on the boundary. Here $a\in\R^{n-1}$, $b>0$. By
``localization'' (see \cite{Gustafsson-Sakai 94}, \cite{Sakai 10})
the set $\Omega_c^+=\{x\in \Omega^+: x_n>b\}$ will still be the
upper half of a quadrature domain, namely for a measure with support on the hyperplane
$x_n=b$. Moreover, this measure will have a continuous density function $f$ on $x_n=b$,
namely given by
$$
f(x_1,\dots, x_{n-1})=-2\frac{\partial u}{\partial x_n} (x_1,\dots,x_{n-1},b)
$$
(cf. (\ref{overdetermined})). This is because the localization measure on $x_n=b$
can be obtained from $u$ by
keeping $u$ unchanged in $x_n>b$ and defining a new continuation of $u$ to $x_n<b$ by reflection
in $x_n=b$. Note by (\ref{dudxnneg}) that the above $f$ is strictly positive at points
belonging to $\Omega$.

Now we translate our system of coordinates so
that $c$ becomes the origin in the new coordinate
system,  and we also adapt the notations in general to fit the new coordinates.
In this new situation $\Omega$ is a subharmonic quadrature domain
for a measure $\mu$ in the hyperplane $x_n=0$ with
\begin{equation}\label{musmooth}
d\mu =f dx_1\dots dx_{n-1},
\end{equation}
$f$ continuous on $\R^{n-1}$, $f> 0$ on $D=\Omega\cap \R^{n-1}$, and the largest ball $B(0,r_0)$ centered at
the origin and contained in $\Omega$ touches $(\partial\Omega)^+$ in at least two points.
The set $\gamma$ is expected to meet $(\partial\Omega)^+$
at these points (cf. Proposition~\ref{prop:gamma} below), and in addition reach the origin,
thus it need to branch.
Thus if we can prove that $\gamma$ is a smooth curve we reach a contradiction.
The conclusion then is that the original
$\Omega$ will have to satisfy the inner ball condition.

Motivated by the above we assume from now on
that $\mu$ is of the form \eqref{musmooth} with $f>0$ on $D$ and continuous on $\R^{n-1}$.

\begin{proposition}
If  $0\notin {\rm conv\,}\overline{\Omega}$,
then $\gamma$ is empty.
If $0\in\Omega$ then $\gamma$ intersects every hemisphere $(\partial B (0,r))^+$
with $0<r<r_0=\inf_{x\in (\partial\Omega)^+} |\x|$.
For $r>0$ sufficiently small there is exactly one intersection point.
At the origin $\gamma$ starts out in the direction $\boldsymbol{\xi}(0)$. The
closure of $\gamma$ reaches $\Omega\cap\R^{n-1}$ only at the origin.
\end{proposition}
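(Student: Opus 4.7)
The statement has four parts and I would use throughout that each coefficient $\omega_{ij}=x_iu_j-x_ju_i$ of $\omega$ is harmonic in $\Omega^+$; indeed $\Delta\omega_{ij}=2u_{ij}-2u_{ji}=0$ since $\Delta u=1$ there. For the claim that $\gamma=\emptyset$ when $0\notin\mathrm{conv}\,\overline\Omega$, the reflective symmetry of $\overline\Omega$ about $\R^{n-1}$ lets me pick a separating hyperplane with horizontal normal, and after a horizontal rotation I may assume $x_1\geq c>0$ on $\overline\Omega$. The single component $\omega_{1n}$ is then a harmonic function on $\Omega^+$ whose boundary values are $0$ on $(\partial\Omega)^+$ (since $\nabla u=0$ there) and $\omega_{1n}(x',0^+)=-\tfrac12 x_1 f(x')\leq -\tfrac12 cf(x')\leq 0$ on $\overline D$, strictly negative on $D$. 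The strong maximum principle then forces $\omega_{1n}<0$ in $\Omega^+$, so $\omega$ never vanishes and $\gamma$ is empty.

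For the existence assertion, observe first that since $g(x')\to 0$ as $x'\to\partial D$ one has $r_0\leq\mathrm{dist}(0,\partial D)$; hence for $0<r<r_0$ the sphere $\partial B(0,r)$ lies entirely in $\overline\Omega$ and its equator lies strictly inside $D$. Let $X=\nabla u-(\nabla u\cdot e_r)e_r$ denote the tangential gradient of $u$ on this sphere. By \eqref{gamma} the zeros of $X$ in the open upper hemisphere are exactly $\gamma\cap(\partial B(0,r))^+$, and $X$ extends continuously to the closed upper hemisphere $\overline{H_+}$ via its boundary values from $\Omega^+$. At an equatorial point $p=(y',0)$, the $n$-th component of $X(p^+)$ equals $\partial_n u(p^+)=-f(y')/2<0$, so $X$ is nonzero everywhere on $\partial\overline{H_+}$ and points out of $\overline{H_+}$ along the entire boundary. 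Since $\chi(\overline{H_+})=\chi(D^{n-1})=1$, the Poincaré--Hopf theorem for a tangent vector field on a compact manifold with boundary that points outward on that boundary yields at least one interior zero, i.e.\ a point of $\gamma$.

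For uniqueness at small $r$, the starting direction at $0$, and the closure statement I would use the Taylor expansion $\nabla u(x)=\nabla u(0^+)+O(|x|)$ near the origin from $\Omega^+$, with $\nabla u(0^+)=(\nabla'u(0),-f(0)/2)\ne 0$. To leading order the zeros of $X$ on $\partial B(0,r)$ are the antipodal pair $\pm r\,\nabla u(0^+)/|\nabla u(0^+)|$, and only $-r\,\nabla u(0^+)/|\nabla u(0^+)|$ has positive $n$-th coordinate. Differentiating \eqref{rho} at $x=0$ gives $\boldsymbol\xi(0)=-(n-1)\nabla u(0^+)$, so this direction agrees with $\boldsymbol\xi(0)/|\boldsymbol\xi(0)|$. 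The tangential linearization of $X$ at this point equals $|\nabla u(0^+)|\,I$ acting on the tangent space, so the implicit function theorem yields a unique nearby zero for each small $r$, proving both uniqueness and the direction claim. Finally, if $\gamma\ni x_k\to x_*\in D$, continuity of $\nabla u$ from $\Omega^+$ gives $0=\omega_{kn}(x_k)\to-\tfrac12 f(x_*')x_{*,k}$; since $f(x_*')>0$ this forces $x_{*,k}=0$ for all $k<n$, i.e.\ $x_*=0$. I expect the main technical obstacle to be verifying that Poincaré--Hopf applies in our $C^1$ setting, where $X$ is only continuous up to the equator of $\overline{H_+}$; however continuity is enough both to identify the outward orientation and to count indices of isolated zeros.
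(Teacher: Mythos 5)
Your proof is correct and follows the same overall strategy as the paper: a maximum-principle argument for $\omega_{1n}$ when the origin is separated from $\overline{\Omega}$, a Poincar\'e--Hopf/index argument for existence of zeros of the tangential gradient on each hemisphere, an analysis near $0$ for small-$r$ uniqueness and the starting direction, and the explicit formula $\omega_{jn}=-\tfrac12 x_j f(x')$ on $D$ for the closure statement. The one place you take a genuinely different route is the uniqueness for small $r$: the paper assumes two zeros $x,y\in\gamma\cap(\partial B(0,r))^+$, writes $\mathbf{x}=-\lambda\nabla u(x)$, $\mathbf{y}=-\mu\nabla u(y)$, and derives a lower bound $|\nabla u(x)-\nabla u(y)|/|\mathbf{x}-\mathbf{y}|\geq 1/(2\lambda)\geq c/(2r)\to\infty$, contradicting Lipschitz continuity of $\nabla u$ up to $\R^{n-1}$. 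You instead identify the leading-order zero $\omega_0=-\nabla u(0^+)/|\nabla u(0^+)|$ and invoke the implicit function theorem at $(r,\omega)=(0,\omega_0)$. Both work under the paper's standing smoothness assumption; the paper's argument is slightly more elementary in that it needs only a Lipschitz bound on $\nabla u$, while your IFT version needs $C^1$ regularity up to the origin (and, to be complete, you should add a word ruling out zeros away from the antipodal pair, which follows from the uniform lower bound $|v-(v\cdot\omega)\omega|\geq c>0$ away from $\pm v/|v|$). A minor difference in the first part: the paper proves emptiness of $\gamma$ under the weaker hypothesis $\supp\mu\subset\{x_1<0\}$, whereas you use the stated hypothesis $\overline\Omega\subset\{x_1\geq c\}$; both are fine here. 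Your concern about applying Poincar\'e--Hopf to a merely continuous vector field is reasonable but not an obstacle, since degree theory (and the nonexistence of a nonvanishing continuous tangent field pointing outward on a disk) works at that level of regularity; the paper simply cites Frankel's index theorems without comment.
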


\begin{proof}
To prove the first statement,
assume $\supp \mu\subset \{(x_1,\dots,x_{n-1})\in\R^{n-1}: x_1< 0\}$,
for example, and consider the component
$$
\omega_{1n}=x_1\frac{\partial u}{\partial x_n}-x_n\frac{\partial u}{\partial x_1}
$$
of $\omega$. We have $\Delta \omega_{1n}=0$  in $\Omega^+$, $\omega_{1n}=0$ on
$(\partial\Omega)^+$. On $\R^{n-1}$, $\omega_{1n}=x_1\frac{\partial u}{\partial x_n}$,
which by assumption (and \eqref{overdetermined}) vanishes for $x_1\geq 0$
and is nonnegative (and not identically zero) for $x_1<0$. Hence $\omega_{1n}>0$,
and so $\omega\ne 0$, in $\Omega^+$.

To prove the second statement, consider any hemisphere $(\partial B(0,r))^+$,
$0<r<r_0$, and the tangent vector field
$\nabla_{\partial B_r} u= \nabla u -(\nabla u \cdot {\bf e}_r){\bf e}_r$ on it. On
the boundary, $\partial B (0,r)\cap \R^{n-1}$, $\nabla_{\partial B_r} u$
points into the lower hemisphere, because $\frac{\partial u}{\partial x_n}<0$.
Thus it follows from well-know index theorems for vector fields (see
\cite{Frankel 97}, for example) that
$\nabla_{\partial B_r} u$ has to vanish somewhere in $(\partial B(0,r))^+$.

Next, assume
$x,y\in\gamma\cap(\partial B(0,r))^+$ ($x\ne y$). Then
$\x=-\lambda \nabla u(x)$, $\y=-\mu \nabla u(y)$ for some $\lambda , \mu >0$
($\x$, $\y$ denote $x$ and $y$ considered as vectors).
An easy calculation, using $|\x|=|\y|$, shows that
$$
\frac{|\nabla u(x)-\nabla u(y)|}{|\x-\y|}
=\frac{1}{\lambda}\frac{|\x-\frac{\lambda}{\mu}\y|}{|\x-\y|}
\geq\frac{1}{\lambda}\inf_{\alpha>0}\frac{|\x-\alpha \y|}{|\x-\y|}
\geq\frac{1}{2\lambda}.
$$

By assumption, $\nabla u$ is smooth in $\Omega^+$ up to $\R^{n-1}$ and
$|\nabla u|\geq -\frac{\partial u}{\partial x_n}\geq c$ for small $r>0$ (for some $c>0$).
It follows that $\lambda=\frac{r}{|\nabla u(x)|}\leq\frac{r}{c}$,
hence $\lambda \to 0$ if we let $r\to 0$. Thus the difference quotients
$\frac{|\nabla u(x)-\nabla u(y)|}{|\x-\y|}$ on $(\partial B(0,r))^+$ tend
to infinity as $r\to 0$, which contradicts the smoothness of $\nabla u$
up to $\R^{n-1}$. The conclusion is that there cannot be two different
points in $\gamma\cap (\partial B(0,r))^+$ for $r>0$ sufficiently small.

Thus, for small $r>0$ there is a unique point
$x=x(r)\in \gamma\cap (\partial B(0,r))^+$.
The direction as $r\to 0$ is (writing $\x(r)=-\lambda(r)\nabla u(x(r))$)
$$
\lim_{r\to 0} \frac{\x(r)}{|\x(r)|}
=\lim_{r\to 0} \frac{-\lambda(r)(-\nabla u(x(r)))}{\lambda(r)|\nabla u(x(r))|}
=-\frac{\nabla u(0)}{|\nabla u(0)|}=\frac{\boldsymbol{\xi}(0)}{|\boldsymbol{\xi}(0)|}.
$$

The last statement of the proposition follows directly from the fact that
$\omega_{jn} = x_j\frac{\partial u}{\partial x_n}= -\frac{1}{2}x_j f(x')\ne0$
on $(\Omega\cap\R^{n-1}) \setminus \{0\}$.

\end{proof}

From the definition \eqref{omegaxdudx} of $\omega_{kj}$ we see that
$$
\omega_{kj}=\frac{1}{x_n} (x_j \omega_{kn} -x_k \omega_{jn}),
$$
hence $\omega$ is algebraically determined by the $n-1$ components
$\omega_{1n},\dots ,\omega_{n-1,n}$. It follows that all information
of $\rho$ and $\omega$ is contained in the map
$$
\sigma: \Omega^+ \to \R^{n}, \quad x\mapsto (\omega_{1n} (x),\dots ,\omega_{n-1,n}(x), \rho(x)).
$$
In the case of two dimensions this $\sigma=(\omega_{12},\rho)$ is, with a switch of coordinates,
the same as the function $\frac{1}{2}zS(z)$ in \eqref{sigma}:
$\frac{1}{2}zS(z)=\rho + \I \ast\omega=\rho + \I \omega_{12}$.
In particular we conclude from the above that
\begin{align*}
\gamma &=\{x\in\Omega^+ : \omega_{1n}(x)=\dots = \omega_{n-1,n}(x)=0 \} \\
&=\sigma^{-1}(\text{the $n$:th coordinate axis}).
\end{align*}

Let $J_\sigma$ denote the Jacobian matrix of $\sigma$:
$$
J_\sigma=
\left(
\begin{array}{llll}
\frac{\partial \omega_{1n}}{\partial x_1} & \frac{\partial \omega_{1n}}{\partial x_2}
& \dots  & \frac{\partial \omega_{1n}}{\partial x_n}\\ \\
\frac{\partial \omega_{2n}}{\partial x_1} & \frac{\partial \omega_{2n}}{\partial x_2}
& \dots & \frac{\partial \omega_{2n}}{\partial x_n}\\ \\
\dots  & \dots  & \dots   &  \dots \\ \\
\frac{\partial \omega_{n-1,n}}{\partial x_1}  & \frac{\partial \omega_{n-1,n}}{\partial x_2}
& \dots & \frac{\partial \omega_{n-1,n}}{\partial x_n}\\ \\
\frac{\partial \rho}{\partial x_1} & \frac{\partial \rho}{\partial x_2}
& \dots & \frac{\partial \rho}{\partial x_n}
\end{array}
\right).
$$
Equation \eqref{components} with $k=n$ says that
$$
{\rm tr\,} J_\sigma =0.
$$
Equivalently, if $\sigma$ considered as a vector field,
$$
{\rm div\,} \sigma=0.
$$
Note also, by Corollary~\ref{rhoharmonic}, that $\sigma$ is a harmonic map
(each component of $\sigma$ is a harmonic function).

Clearly $\sigma$ extends continuously to $\overline{\Omega^+}$. On $(\partial\Omega)^+$,
\begin{equation}\label{sigma00}
\sigma =(0,\dots, 0,\frac{1}{2}r^2),
\end{equation}
hence $(\partial\Omega)^+$, of dimension $n-1$, is mapped by $\sigma$ onto a set of dimension at most one.
It follows that ${\rm rank\,} J_\sigma \leq 2$ at all points of $(\partial\Omega)^+$
(since $\sigma$ loses at least $n-2$ dimensions). To be more precise, for any vector
$\boldsymbol{\eta}$,
$$
J_\sigma \boldsymbol{\eta} =
\left(
\begin{array}{l}
\boldsymbol{\eta}\cdot \nabla \omega_{1n}\\ \\
\boldsymbol{\eta}\cdot \nabla \omega_{2n}\\ \\
   \dots   \\ \\
\boldsymbol{\eta}\cdot \nabla \omega_{n-1,n}\\ \\
\boldsymbol{\eta}\cdot \nabla \rho
\end{array}
\right),
$$
and writing, at a given point on $(\partial\Omega)^+$, $\boldsymbol{\eta}$ as
$$
\boldsymbol{\eta} = \boldsymbol{\eta}_t + \eta_n \, {\bf {n}}
$$
with $\boldsymbol{\eta}_t$ tangent to $(\partial\Omega)^+$ and
$\eta_n\in \R$ gives (since $\omega_{jn}=0$ on $(\partial\Omega)^+$),
$$
J_\sigma \boldsymbol{\eta} =
(\boldsymbol{\eta}_t\cdot \nabla \rho)\,
{\bf {e}}_n +\eta_n \,J_\sigma \,({\bf {n}})
=(\boldsymbol{\eta}_t\cdot {\bf x})\,
{\bf {e}}_n +\eta_n \,J_\sigma \,({\bf {n}}),
$$
where ${\bf e}_n$ is the $n$:th unit vector
(for the last member Proposition~\ref{projection} was used). Thus
the range of $J_\sigma$ is contained in
the (at most) two dimensional vector space spanned by
${\bf {e}}_n$ and $J_\sigma ({\bf {n}})$.

At points on $(\partial\Omega)^+$ where $\frac{1}{2}r^2$ is stationary,
i.e., $\boldsymbol{\xi}=0$, we have $\boldsymbol{\eta}_t\cdot \nabla \rho=0$,
hence ${\rm rank\,}J_\sigma \leq 1$. Since the trace of $J_\sigma$ is zero
it follows that all eigenvalues of $J_\sigma$ are zero. In two
dimensions it even follows that $J_\sigma=0$, because $J_\sigma$ is
symmetric when $n=2$.

Returning to the mapping properties of $\sigma$, we see from \eqref{sigma00} that $\sigma$
maps $(\partial\Omega)^+$ onto the segment $\frac{1}{2}r_0^2< \sigma_n<\frac{1}{2}r_1^2$ of the $\sigma_n$-axis,
possibly except for endpoints,
where
$$
r_0 =\inf_{(\partial\Omega)^+} r,
\qquad r_1 =\sup_{(\partial\Omega)^+} r.
$$
On $D=\Omega\cap\R^{n-1}$ we have
\begin{align}
\sigma &=(x_1 \frac{\partial u}{\partial x_n},\dots, x_{n-1} \frac{\partial u}{\partial x_n},
\frac{1}{2}r^2-r \frac{\partial u}{\partial r}- (n-2)u) \notag \\
&=(-\frac{x_1}{2} f(x'), \dots , -\frac{x_{n-1}}{2} f(x'),
\frac{1}{2}r^2-r \frac{\partial u}{\partial r}- (n-2)u).\label{opposite}
\end{align}
Thus, since we have assumed that $f>0$ on $D$, $\sigma(D)$ meets the $\sigma_n$-axis only if $0\in\Omega$.
The meeting point is
$\sigma (0)= (0,\dots,0, -(n-2)u(0))$, hence has $\sigma_n(0)\leq 0$.

In summary, $\sigma$ maps the boundary $\partial(\Omega^+)$ of $\Omega^+$ as follows:
$0\in\partial (\Omega^+)$ is mapped onto a point $(0,\dots,0,\sigma_n)$ with $\sigma_n=-(n-2)u(0)\leq 0$,
$\partial(\Omega^+)\cap (\R^n)^+= (\partial \Omega)^+$ is mapped to points on the $\sigma_n$-axis,
with $\sigma_n \geq \frac{1}{2} r_0^2 >0$, and for the remaining points $x\in D\setminus \{0\}$ of
$\partial(\Omega^+)$, $\sigma (x)$ does not meet the $\sigma_n$-axis.
More precisely, $x$ and $\sigma (x)$ are by \eqref{opposite} located
on opposite sides of the $\sigma_n$-axis.
It follows from all this that $\partial(\Omega^+)$ encloses the
interval $[-(n-2)u(0),\frac{1}{2}r_0^2]$
on the $\sigma_n$-axis $(-1)^{n-1}$ times in the sense of degree theory. Thus

\begin{theorem}\label{thm:main}
Assume $0\in\Omega$ and that $\mu$ is of the form \eqref{musmooth} with $f>0$ on $D$.
Then the map  $\sigma$ has mapping degree (Brouwer degree)
$(-1)^{n-1}$ with respect to each point $(0,\dots, 0, t)$ with $-(n-2)u(0)<t<\frac{r_0^2}{2}$.
\end{theorem}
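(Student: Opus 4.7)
The plan is to compute the Brouwer degree by homotoping $\sigma$ to an explicit model map, using homotopy invariance. Since the degree $\deg(\sigma,\Omega^+,q)$ is locally constant in $q$ on $\R^n\setminus\sigma(\partial\Omega^+)$, and since the description of $\sigma|_{\partial\Omega^+}$ given immediately before the theorem shows that the open segment $\{(0,\ldots,0,t):-(n-2)u(0)<t<r_0^2/2\}$ is a single connected subset of that complement ($\sigma((\partial\Omega)^+\cup\partial D)$ lies in $\{\sigma_n\ge r_0^2/2\}$, $\sigma(D\setminus\{0\})$ stays off the $\sigma_n$-axis, and $\sigma(0)=(0,\ldots,0,-(n-2)u(0))$ is the excluded endpoint), it suffices to verify the claim for one conveniently chosen $t$. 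I would pick $t$ in the nonempty subinterval $(0,\min(g(0),r_0^2/2))$ and write $p=(0,\ldots,0,t)$.

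Introduce the model map $\tilde\sigma:\overline{\Omega^+}\to\R^n$ defined by
$$
\tilde\sigma(x)=(-x_1,-x_2,\ldots,-x_{n-1},x_n),
$$
whose Jacobian is the constant diagonal matrix $\mathrm{diag}(-1,\ldots,-1,1)$ with determinant $(-1)^{n-1}$. The unique preimage of $p$ under $\tilde\sigma$ is the single point $(0,\ldots,0,t)$, which lies in $\Omega^+$ because $0\in D$ and $0<t<g(0)$. Hence $\deg(\tilde\sigma,\Omega^+,p)=(-1)^{n-1}$. It therefore remains to show that the straight-line homotopy $H_\lambda=(1-\lambda)\sigma+\lambda\tilde\sigma$ satisfies $p\notin H_\lambda(\partial\Omega^+)$ for every $\lambda\in[0,1]$; by homotopy invariance this will yield $\deg(\sigma,\Omega^+,p)=(-1)^{n-1}$.

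The verification is a case analysis on $\partial\Omega^+=(\partial\Omega)^+\cup\overline{D}$. On $(\partial\Omega)^+$, where $\sigma=(0,\ldots,0,r^2/2)$ with $r^2/2\ge r_0^2/2$, one has $H_\lambda=(-\lambda x_1,\ldots,-\lambda x_{n-1},(1-\lambda)r^2/2+\lambda g(x'))$; for $\lambda=0$ or $x'\ne 0$ either the last coordinate is $r^2/2>t$ or some first coordinate is nonzero, and at the single point $x'=0$ the last coordinate is a convex combination of $g(0)^2/2\ge r_0^2/2>t$ and $g(0)>t$, hence exceeds $t$. On $D$, using $\sigma(x',0)=(-x_1f(x')/2,\ldots,-x_{n-1}f(x')/2,\rho(x',0))$, one gets
$$
H_\lambda(x',0)=\bigl(-x_1\bigl((1-\lambda)\tfrac{f}{2}+\lambda\bigr),\ldots,-x_{n-1}\bigl((1-\lambda)\tfrac{f}{2}+\lambda\bigr),(1-\lambda)\rho(x',0)\bigr);
$$
at $x'=0$ the last coordinate equals $-(1-\lambda)(n-2)u(0)\le 0<t$; in the interior of $D$ the factor $(1-\lambda)f/2+\lambda$ is strictly positive (the two nonnegative summands are never simultaneously zero), so for $x'\ne 0$ some first coordinate is nonzero; and on $\partial D$, where $f=u=0$ and so $\rho=|x'|^2/2\ge r_0^2/2$, $H_\lambda=(-\lambda x_1,\ldots,-\lambda x_{n-1},(1-\lambda)|x'|^2/2)$ avoids $p$ because $0$ is interior to $D$ so $x'\ne 0$ on $\partial D$.

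The hard part is purely bookkeeping: one must carefully treat the corners $x'=0$ of each face, the ridge $\partial D$ where $f$ vanishes, and the endpoint $\lambda=0$ where the prefactor $(1-\lambda)f/2+\lambda$ degenerates. No deeper input is required beyond the pointwise formulas for $\sigma$ on $\partial\Omega^+$ listed just before the theorem, the hypothesis $0\in\Omega$ (which gives $u(0)\ge 0$ and $0\in D$), and the chosen bounds $0<t<\min(g(0),r_0^2/2)$.
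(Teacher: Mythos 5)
Your proof is correct and follows essentially the same approach as the paper. The paper establishes the boundary behavior of $\sigma$ (that $\sigma(0)$ sits at $\sigma_n\le 0$, $\sigma((\partial\Omega)^+\cup\partial D)$ lies on the $\sigma_n$-axis with $\sigma_n\ge r_0^2/2$, and for $x\in D\setminus\{0\}$ the image $\sigma(x)$ lies on the side of the $\sigma_n$-axis opposite to $x$), and then asserts ``it follows from all this'' that the enclosure number is $(-1)^{n-1}$; your straight-line homotopy from $\sigma$ to $\tilde\sigma(x)=(-x_1,\dots,-x_{n-1},x_n)$, which is admissible precisely because of the ``opposite side'' property on $D\setminus\{0\}$, is a clean and rigorous way to make that final step explicit.
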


A conclusion is that each value $(0,\dots, 0, t)$, $0<t<\frac{r_0^2}{2}$, is
attained at least once, but it can also be attained several times with
signatures (signs of the Jacobi determinant) adding up to $(-1)^{n-1}$.

In the case of two dimensions a stronger conclusion is possible because
the Jacobi determinant has constant sign: $\det J_\sigma \leq 0$.
Switching the order between $\omega$ and $\rho$ one gets the analytic
function $\frac{1}{2}zS(z)=\rho + \I \ast\omega$
(which we still denote $\sigma(z)$), with nonnegative Jacobi determinant,
and all arguments become more familiar, e.g., the mapping degree can be identified
with the argument
variation divided by $2\pi$. We repeat everything in this case, with stronger
assertions.

\begin{corollary}\label{cor:main}
With assumptions as in the theorem we have, in the case $n=2$,
that if $0\in\Omega$ then there is exactly one point on $(\partial\Omega)^+$
of shortest distance to the origin and
$\gamma$ is an analytic smooth arc from origin to that point. In particular, the equivalent conditions
in Theorem~\ref{theorem1} hold.
In addition, $\gamma$ is an integral curve of $\boldsymbol{\xi}$.
\end{corollary}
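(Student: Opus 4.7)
The plan is to exploit that in two dimensions $\sigma(z)=\frac{1}{2}zS(z)=\rho+\I\,\omega_{12}$ is holomorphic on $\Omega^+$, so its Jacobian is $|\sigma'|^2\geq 0$ throughout. Theorem~\ref{thm:main}, combined with the sign of this Jacobian (the ``switch of coordinates'' described after the theorem), forces the Brouwer degree of $\sigma$ at every $t\in(0,r_0^2/2)$ on the real axis to equal $+1$. By the argument principle, $\sigma^{-1}(t)$ consists of a single point $z(t)\in\Omega^+$ at which $\sigma'(z(t))\neq 0$, and the implicit function theorem then makes $t\mapsto z(t)$ a real-analytic parametrization of a branch of $\gamma=\{\omega_{12}=0\}$. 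By the preceding proposition on the behaviour of $\gamma$ at the origin, this branch emanates from the origin as $t\to 0^+$; as $t\to(r_0^2/2)^-$ it converges to some point $p\in(\partial\Omega)^+$ with $|p|=r_0$, since $z(t)$ must leave every compact subset of $\Omega^+$ and cannot accumulate on $D\setminus\{0\}$, where $\sigma$ is non-real.

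The crux is uniqueness of $p$. If $p'\neq p$ were another point on $(\partial\Omega)^+$ with $|p'|=r_0$, it would be a local minimum of $r$ along $(\partial\Omega)^+$, hence $\boldsymbol{\xi}(p')=0$ and $\sigma'(p')=0$. The local expansion $\sigma(z)=\frac{1}{2}r_0^2+a(z-p')^2+O((z-p')^3)$ then yields two solutions $z=p'\pm w$ of $\sigma(z)=t$ for $t$ slightly below $r_0^2/2$. The reality constraint that $a(z-p')^2\geq 0$ along $(\partial\Omega)^+$ near $p'$ pins the boundary tangent at $p'$ perpendicular to $w$, so $p'+w$ and $p'-w$ lie on opposite sides of $(\partial\Omega)^+$ and exactly one of them falls inside $\Omega^+$. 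This produces a second preimage of $t$, contradicting the unique-preimage property above. The inner ball condition then follows by running the same analysis at every admissible centre: for $a\in D$ via a translation along $\R^{n-1}$, which preserves both the symmetry hyperplane and the quadrature property, and for $c=(a,b)\in\Omega^+$ with $b>0$ by first invoking the localization procedure of \cite{Sakai 10} to recast $\Omega\cap\{x_n>b\}$ as the upper half of a new subharmonic quadrature domain symmetric about $\{x_n=b\}$. This establishes condition~(viii) of Theorem~\ref{theorem1}, and hence all the equivalent conditions there.

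Finally, $\gamma$ is an integral curve of $\boldsymbol{\xi}=\nabla\rho$ because it is a level curve of the harmonic conjugate $\omega_{12}$ of $\rho$: its tangent is perpendicular to $\nabla\omega_{12}$, and the Cauchy--Riemann equations force $\nabla\omega_{12}$ to be a $90^\circ$ rotation of $\nabla\rho$, so the tangent to $\gamma$ is parallel to $\boldsymbol{\xi}$. The hardest step is the uniqueness argument, whose delicacy lies in extracting the correct direction of $w$ from the reality constraint on $a(z-p')^2$ along the boundary so that exactly one of $p'\pm w$ sits in $\Omega^+$. Upgrading condition~(viii) from $a\in D$ to arbitrary $c\in\Omega^+$ via localization is the other technical point, but once the planar uniqueness statement is available it is essentially bookkeeping.
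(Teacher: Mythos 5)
Your proposal follows essentially the same route as the paper's proof: apply the argument principle to the holomorphic function $\sigma(z)=\tfrac12 zS(z)=\rho+\I\ast\omega$ on $\Omega^+$, read off from the boundary behaviour that every $t\in(0,\tfrac12 r_0^2)$ has winding number one, conclude that $\sigma^{-1}(t)$ is a single simple preimage giving a bijective analytic parametrization of $\gamma$, and then use the analytic continuation of $\sigma$ across $(\partial\Omega)^+$ to pin down the unique endpoint on $(\partial\Omega)^+$. The derivation that $\gamma$ is an integral curve of $\boldsymbol{\xi}$ from the Cauchy--Riemann relations is likewise the paper's argument.

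The one place you diverge is in the uniqueness step, where you argue at a hypothetical second closest point $p'$ via the Taylor expansion $\sigma(z)=\tfrac12 r_0^2 + a(z-p')^2 + O((z-p')^3)$ and the two roots $p'\pm w$. The paper instead argues at the known endpoint $z_0$, using only the openness of the extended $\sigma$: every neighbourhood of $z_0$ has image containing $(\tfrac12 r_0^2-\varepsilon,\tfrac12 r_0^2)$, hence captures the terminal part of $\gamma$, hence $\overline\gamma\cap(\partial\Omega)^+=\{z_0\}$. Your version has a small unaddressed case: you implicitly assume $a=\tfrac12\sigma''(p')\neq 0$. Since $p'$ is a stationary point of $\tfrac12 r^2$ on $(\partial\Omega)^+$ one knows $\sigma'(p')=0$ (the paper shows $J_\sigma=0$ there), but nothing forces $\sigma''(p')\neq 0$; if $\sigma-\tfrac12 r_0^2$ vanishes to order $k\geq 3$ at $p'$ the ``two roots $p'\pm w$'' picture must be replaced by $k$ roots spread at angles $2\pi/k$, and the argument that at least one lies in $\Omega^+$ needs a different justification (here the paper's openness formulation, which is insensitive to the vanishing order, is cleaner). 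This is a real gap in the write-up, though the strategy is salvageable; the rest — leaving compact sets as $t\to(\tfrac12 r_0^2)^-$, ruling out accumulation on $D\setminus\{0\}$ via \eqref{opposite}, the localization reduction via \cite{Sakai 10} to upgrade to condition (viii) of Theorem~\ref{theorem1}, and the final Cauchy--Riemann argument — all match the paper.
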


\begin{proof}
Let $r_0>0$ be the distance from the origin to $(\partial\Omega)^+$. We apply
the argument principle to the analytic function
${\sigma}(z)=\frac{1}{2}zS(z)=\rho + \I \ast\omega$ in $\Omega^+$.
We have $\sigma (0)=0$, $\im \sigma (x)>0$ for $x<0$, $\im \sigma (x)<0$ for $x>0$.
For $z\in (\partial\Omega)^+$, $\sigma (z)=\frac{1}{2}|z|^2$, hence
$\sigma((\partial\Omega)^+)\subset [\frac{1}{2}r_0^2,\infty)$.

It follows that as $z$ runs through $\partial(\Omega^+)$, then ${\sigma} (z)$ encircles each point
on the open interval $(0,\frac{1}{2}r_0^2)\subset \R$ exactly once. In addition,
no other point on $\R$ is encircled, although some points lie on the image curve
$\sigma(\partial(\Omega^+))$.
But now $\gamma$ is exactly the inverse image of $\R$ under ${\sigma}$, hence it follows
that $\sigma$ is univalent on $\gamma$.
This means that $t\mapsto \sigma^{-1}(t)$, for $0<t<\frac{1}{2}r_0^2$, is a bijective analytic parametrization of $\gamma$.

Since $(\partial\Omega)^+$ is known to be analytic without singular points in the present geometry
\cite{Sakai 82}, \cite{Sakai 91}, \cite{Gustafsson-Sakai 94} the Schwarz function $S(z)$, and hence $\sigma(z)$,
extends to be analytic in a full neighborhood of $(\partial\Omega)^+$. In particular, $\sigma$ is an open
mapping in such a neighborhood.

Let $z_0\in(\partial\Omega)^+$ be a point on distance $r_0$ from the origin. Then, with the above extended $\sigma$,
$\sigma(z_0)=\frac{1}{2}r_0^2$, and any neighborhood $U$ of $z_0$ in $(\R^2)^+$ is
mapped onto a full neighborhood
$\sigma(U)$ of $\frac{1}{2}r_0^2$.
The latter neighborhood contains the interval
$(\frac{1}{2}r_0^2-\varepsilon,\frac{1}{2}r_0^2)$ on the real axis for some
$\varepsilon >0$, hence $U$ contains the final part of $\gamma$.
Since this is true for any neighborhood $U$ of $z_0$ it follows that $\overline{\gamma}\cap(\partial\Omega)^+$
consists of just the point $z_0$. In particular, there is only one point on $(\partial\Omega)^+$
on distance $r_0$ from the origin.

The last statement of the corollary follows from the fact that $L_{\boldsymbol{\xi}}(\ast \omega)=0$
(see \eqref{Lie}) in two dimensions, or simply from the analyticity of
$\frac{1}{2}zS(z)=\rho + \I \ast\omega$: $\gamma$ is a level line of $\ast\omega$
and $\boldsymbol{\xi}=\nabla \rho$ is perpendicular to level lines of $\rho$, which are orthogonal
to those of $\ast\omega$.
\end{proof}

Now we return to $n\geq 2$ dimensions. We still expect that there is only one point on $(\partial\Omega)^+$
of shortest distance to the origin, that $\gamma$ reaches $(\partial\Omega)^+$ only at that point, and that
there are no other stationary points of $\frac{1}{2}r^2$. The following proposition gives some partial results
in this direction.

\begin{proposition}\label{prop:gamma}
The closure of $\gamma$ in $\R^n$ intersects $(\partial\Omega)^+$
only at points of $(\partial\Omega)^+$
where $\boldsymbol{\xi}=0$, i.e., where $\frac{1}{2}r^2$ is stationary.
If there is a strict local maximum of $\frac{1}{2}r^2$ at some point $x\in(\partial\Omega)^+$, then
$x$ is in the closure of $\gamma$.
\end{proposition}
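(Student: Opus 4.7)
The plan is to handle the two assertions separately, in both cases exploiting the quadratic vanishing of $u$ at the real analytic boundary $(\partial\Omega)^+$.

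For the first assertion, suppose $x \in \overline{\gamma} \cap (\partial\Omega)^+$ and pick $x_k \in \gamma$ with $x_k \to x$. Let $d(y) \geq 0$ denote the distance from $y \in \overline{\Omega^+}$ to $(\partial\Omega)^+$; near $x$ the function $d$ is real analytic and $\nabla d(x) = \mathbf{N}(x)$, the inward unit normal to $(\partial\Omega)^+$ at $x$. In boundary-normal coordinates, using $u = |\nabla u| = 0$ on $(\partial\Omega)^+$ together with $\Delta u = 1$, a Taylor expansion along the normal direction yields
\[
u(y) = \tfrac{1}{2}\, d(y)^2 + O(d(y)^3), \qquad \nabla u(y) = d(y)\, \nabla d(y) + O(d(y)^2),
\]
so that $\nabla u(x_k)/|\nabla u(x_k)| \to \mathbf{N}(x)$. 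On the other hand, $x_k \in \gamma$ forces $\nabla u(x_k) = \mu_k \mathbf{x}_k$ for some scalar $\mu_k \neq 0$ (nonzero by \eqref{dudxnneg}), and passing to a subsequence with constant sign of $\mu_k$ gives $\nabla u(x_k)/|\nabla u(x_k)| \to \pm \mathbf{x}/|\mathbf{x}|$. Hence $\mathbf{x}$ is parallel to $\mathbf{N}(x)$, i.e.\ $\mathbf{x}$ is normal to $(\partial\Omega)^+$ at $x$, and Proposition~\ref{projection} gives $\boldsymbol{\xi}(x) = 0$.

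For the second assertion, let $x \in (\partial\Omega)^+$ be a strict local maximum of $\tfrac{1}{2}r^2$, and choose an open neighborhood $U = B(x, \varepsilon)$ in $\R^n$ so small that $|y|^2 < |x|^2$ for every $y \in (\partial\Omega)^+ \cap U$ with $y \neq x$. The maximality forces $\mathbf{x}$ to be a positive multiple of the outward unit normal at $x$, so for $r < |x|$ sufficiently close to $|x|$ the sphere $\partial B(0, r)$ meets $\Omega^+ \cap U$, and the level set
\[
\Gamma_r := \{\, y \in (\partial\Omega)^+ \cap U : |y| = r\,\}
\]
is a topological $(n{-}2)$-sphere encircling $x$ on $(\partial\Omega)^+$. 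Transversality of $\partial B(0, r)$ and $(\partial\Omega)^+$ along $\Gamma_r$ shows that $\Gamma_r$ separates $\partial B(0, r) \cap U$ locally, and the component $K_r$ on the $\overline{\Omega^+}$ side is a compact ``cap'' with boundary $\Gamma_r$, shrinking to $\{x\}$ as $r \uparrow |x|$. The function $u$ is continuous on $K_r$, vanishes on $\partial K_r = \Gamma_r$, and is strictly positive on the interior, so it attains its maximum at some interior point $z_r$; there the tangential gradient along $\partial B(0, r)$ vanishes, hence $z_r \in \gamma$. Since $z_r \to x$, we obtain $x \in \overline{\gamma}$.

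The main obstacle is the topological bookkeeping in the second part: checking that $\Gamma_r$ is really a single smooth $(n{-}2)$-sphere encircling $x$ (rather than breaking into several pieces), and that $K_r$ is a single connected cap bounded exactly by $\Gamma_r$ and contained in $U$ for all $r$ sufficiently close to $|x|$. This rests on the strict-local-maximum condition, the real-analytic graph form of $(\partial\Omega)^+$ near $x$, and transversality of $\partial B(0, r)$ with $(\partial\Omega)^+$; writing it out rigorously requires some careful case analysis but no new ideas. The first part, by contrast, is essentially a short asymptotic calculation once the quadratic normal form $u \sim \tfrac{1}{2} d^2$ near $(\partial\Omega)^+$ is in hand.
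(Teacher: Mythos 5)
Your proof is correct and takes essentially the same route as the paper: the first part makes precise, via the quadratic expansion $u \sim \tfrac{1}{2}d^2$ near $(\partial\Omega)^+$, the paper's observation that $\nabla u$ is asymptotically in the inward normal direction near the boundary (so at any limit point of $\gamma$ on $(\partial\Omega)^+$ the radius vector must be normal, i.e.\ $\boldsymbol{\xi}=0$), while the second part locates a point of $\gamma$ as a critical point of $u$ restricted to the cap $\Omega^+\cap\partial B(0,r')$ for $r'$ slightly below $|x|$. The topological bookkeeping you flag as the main obstacle is actually unnecessary: one does not need $\Gamma_r$ to be a connected $(n{-}2)$-sphere, only that the cap have compact closure in $N$ with boundary lying entirely on $(\partial\Omega)^+$, and this follows at once from strict local maximality by choosing $N$ small enough that $(\overline{\Omega^+}\setminus\{x\})\cap N\subset B(0,|x|)$ and then taking $r'$ sufficiently close to $|x|$ (so the cap cannot reach $\partial N$).
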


\begin{proof}
It is easy to see that, at points in $\Omega^+$ close to
$(\partial\Omega)^+$, $\nabla u$ is essentially directed in the
inward normal direction. At any point $x\in (\partial\Omega)^+$ where
$\boldsymbol{\xi}\ne 0$, the vector $\mathbf{x}$ from the origin to $x$ has a nonzero angle
to the normal vector. It follows from (\ref{gamma}) that $\nabla u$ cannot be parallel
to $\mathbf{x}$ close to such points, hence that $\gamma$ does not
reach points of $(\partial\Omega)^+$ where $\boldsymbol{\xi}\ne 0$.

Next, let $x\in(\partial\Omega)^+$ be a point at which $\frac{1}{2}r^2$
attains a strict local maximum. Then for some neighborhood $N$ of $x$
$$
(\overline{\Omega^+}\setminus \{x\})\cap N\subset B(0,r)\cap N
$$
($r=|x|$). For a slightly smaller $r$, say $r'<r$,
$S=(\Omega^+\cap \partial B(0,r'))\cap N$ is a piece of a sphere
cut off by $(\partial\Omega)^+$. In this piece $u>0$, while on the boundary
$u=0$. It follows that there is a point on $S$ where $u|_S$ attains a local maximum.
But at such a point the gradient $\nabla u$ is perpendicular to $S$, which means that the
point belongs to $\gamma$. Since
$r'$ was arbitrarily close to $r$, this means that we can produce points on $\gamma$
arbitrarily close to $x$.
\end{proof}


\subsection{The Hessian}

Let
$$
H=(\frac{\partial^2u}{\partial x_i \partial x_j})
$$
be the Hessian of $u$. It is a symmetric matrix with
${\rm tr\,}H =\Delta u =1$ in $\Omega^+$. The rank of
$H$ is at least one (since the sum of eigenvalues is
different from zero). On $(\partial\Omega)^+$ the rank
is exactly one since $\nabla u=0$ on $(\partial\Omega)^+$
and hence all derivatives of $\nabla u$ in tangential directions
of $(\partial\Omega)^+$ are zero.

From
$r\frac{\partial}{\partial r}\nabla u =(\mathbf{x}\cdot \nabla)\nabla u= H\mathbf{x}$,
where the last product is matrix multiplication, it follows that the definition of $\boldsymbol{\xi}$
can be written in terms of $H$ as
$$
\boldsymbol{\xi}=\mathbf{x} -(n-1)\nabla u -  H\mathbf{x}.
$$
Thus, on $(\partial\Omega)^+$,
$$
H\mathbf{x}=\mathbf{x}-\boldsymbol{\xi},
$$
and in particular
$$
H\mathbf{x}=\mathbf{x}
$$
at stationary points (for $\frac{1}{2}r^2$) on $(\partial\Omega)^+$.

More generally we can write, for any $\boldsymbol{\eta}\in\R^n$,
$$
H\boldsymbol{\eta} =\nabla(\boldsymbol{\eta}\cdot \nabla u).
$$
On $(\partial\Omega)^+$, $\boldsymbol{\eta}\cdot \nabla u =0$, hence $\nabla(\boldsymbol{\eta}\cdot \nabla u)$
is perpendicular to $(\partial\Omega)^+$. Therefore, on $(\partial\Omega)^+$,
$$
H \boldsymbol{\eta}=\lambda \mathbf{{n}}
$$
for some $\lambda=\lambda(\boldsymbol{\eta})\in\R$. It follows that
$\mathbf{{n}}$ defines the only eigendirection for $H$ corresponding to a
nonzero eigenvalue, and since the sum of eigenvalues equals one, that
$$
H\mathbf{{n}}=\mathbf{{n}}.
$$
Moreover, it follows that $H$ annihilates every vector tangent to
$(\partial\Omega)^+$. Thus, on $(\partial\Omega)^+$, $H$ simply is the
orthogonal projection onto the normal of $(\partial\Omega)^+$:
$$
H\boldsymbol{\eta} =(\boldsymbol{\eta}\cdot \mathbf{{n}})\mathbf{{n}}.
$$


\begin{proposition}
With $dx=dx_1\dots dx_n=dm$,
\begin{align*}
\int_{\Omega^+}\frac{\partial^2 u}{\partial x_i \partial x_j} dx
&=0 \quad \big((i,j)\ne (n,n)\big),\\
\int_{\Omega^+}\frac{\partial^2 u}{\partial x_n^2 } dx
&=\frac{1}{2}{m(\Omega)}=\frac{1}{2}\int d\mu.
\end{align*}

\end{proposition}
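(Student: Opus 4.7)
The plan is to prove both identities by the same divergence-theorem argument applied on $\Omega^+$. The boundary of $\Omega^+$ decomposes as $(\partial\Omega)^+\cup\overline{D}$, where $\nabla u=0$ on $(\partial\Omega)^+$ and the outward unit normal on $D$ is $-{\bf e}_n$. Each second partial $\partial^2 u/\partial x_i\partial x_j$ can be written as the divergence of the vector field $(\partial u/\partial x_j){\bf e}_i$, and the game is then to see which of the two boundary contributions survives.

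For the first identity, fix $(i,j)\neq(n,n)$. By symmetry of mixed partials I may assume $i<n$. Applying the divergence theorem to $(\partial u/\partial x_j){\bf e}_i$ yields
$$
\int_{\Omega^+}\frac{\partial^2 u}{\partial x_i \partial x_j}\,dx
=\int_{(\partial\Omega)^+}\frac{\partial u}{\partial x_j}\,n_i\,d\sigma
-\int_D \frac{\partial u}{\partial x_j}\,\delta_{in}\,dx_1\cdots dx_{n-1}.
$$
The first boundary integral vanishes since $\partial u/\partial x_j=0$ on $(\partial\Omega)^+$, and the second vanishes since $\delta_{in}=0$ for $i<n$.

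For the $(n,n)$ case, the same calculation with the field $(\partial u/\partial x_n){\bf e}_n$ reduces the integral to a boundary term on $D$:
$$
\int_{\Omega^+}\frac{\partial^2u}{\partial x_n^2}\,dx
=-\int_D\frac{\partial u}{\partial x_n}\,dx_1\cdots dx_{n-1}
=\frac{1}{2}\int_D f\,dx_1\cdots dx_{n-1}
=\frac{1}{2}\int d\mu,
$$
where the overdetermined boundary condition $-2\,\partial u/\partial x_n=f$ on $D$ from \eqref{overdetermined} is used in the middle equality. The remaining identity $\int d\mu=m(\Omega)$ follows from the subharmonic quadrature property applied to the test functions $h\equiv\pm 1$, which are both subharmonic, or equivalently by integrating the distributional equation $\Delta u=\chi_\Omega-\mu$ over all of $\R^n$.

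I do not foresee any serious obstacle beyond routine verification. The one point that demands attention is the application of the divergence theorem, which is justified by the fact that $u$ is $C^1$ up to $(\partial\Omega)^+$ (where $\nabla u=0$) and has one-sided $C^1$ boundary values on $D$ with $\partial u/\partial x_n=-f/2$, together with the observation that $\partial\Omega^+$ is Lipschitz outside the measure-zero corner set $\partial D$ (where $g\to 0$).
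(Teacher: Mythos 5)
Your proof is correct and the core mechanism — divergence theorem (equivalently Stokes) on $\Omega^+$, with the boundary term on $(\partial\Omega)^+$ killed by $\nabla u=0$ and the $D$-term killed by $\delta_{in}=0$ for $i<n$ — is the same as the paper's. The one small variation is in the $(n,n)$ entry: the paper subtracts the already-proved vanishing integrals from $\int_{\Omega^+}\Delta u\,dx$ and uses $\Delta u=1$ to land directly on $m(\Omega^+)=\tfrac12 m(\Omega)$, whereas you keep the $D$-boundary term and use the Neumann condition $-2\,\partial u/\partial x_n=f$ to land on $\tfrac12\int d\mu$ instead, then invoke $m(\Omega)=\int d\mu$ (via $h\equiv\pm1$) to close the loop; both paths are equally routine, and the paper also needs $m(\Omega)=\int d\mu$ for its last equality.

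Two minor cautions worth noting. First, the assertion that $\int d\mu=m(\Omega)$ follows "by integrating $\Delta u=\chi_\Omega-\mu$ over all of $\R^n$'' is circular as stated, since one needs to know $u$ decays faster than $|x|^{2-n}$, which is itself equivalent to the masses agreeing; the quadrature-property argument with $h\equiv\pm1$ is the clean justification, so keep that one. Second, the $(n,n)$ boundary computation relies on the hypothesis $d\mu=f\,dx'$ from \eqref{musmooth}, which the paper has already imposed at this stage of the section, so that is fine; the paper's version avoids ever touching the $D$-boundary term and is therefore slightly more robust if one ever wanted to drop that smoothness assumption.
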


\begin{proof}
For any entry $(i,j)\ne (n,n)$, say $i=1$, $j$ arbitrary, we have
\begin{align*}
\int_{\Omega^+}\frac{\partial^2 u}{\partial x_1 \partial x_j} dx_1\dots dx_n
&=\int_{\Omega^+}d(\frac{\partial u}{ \partial x_j} dx_2\dots dx_n) \\
&=\int_{\partial(\Omega^+)}\frac{\partial u}{ \partial x_j} dx_2\dots dx_n=0
\end{align*}
(note that $dx_n =0$ on $\R^{n-1}$), while for $i=j=n$, using the above
$$
\int_{\Omega^+}\frac{\partial^2 u}{\partial x_n^2 } dx =\int_{\Omega^+}\Delta u dx=\frac{1}{2}{m(\Omega)}=\frac{1}{2}\int d\mu.
$$

\end{proof}


\subsection{Convexity questions}

We finish by some simple observations related to convexity.

\begin{proposition}
The restriction of $U^\mu$ to any hemisphere
$(\partial B (a,R))^+$ with center $a$ on $\R^{n-1}$
is convex as a function of $x'=(x_1,\dots, x_{n-1})$.
\end{proposition}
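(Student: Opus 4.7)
The plan is to reduce the statement to a pointwise convexity statement for the Newtonian kernel, by parametrizing the hemisphere by its projection onto the equatorial hyperplane.

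First, I would write $(\partial B(a,R))^+$ as the graph of $x_n = \sqrt{R^2 - |x'-a|^2}$ over the disk $B(a,R) \subset \R^{n-1}$, so that the restriction of $U^\mu$ becomes a function of $x'$. Since $U^\mu$ is a positive linear combination (integral with respect to $\mu \geq 0$) of the kernels $k(|x-y|)$ as $y$ ranges over $\R^{n-1}$, with $k(r) = r^{-(n-2)}$ for $n \geq 3$ and $k(r) = -\log r$ for $n = 2$ (up to positive normalization constants), it suffices to prove that for each fixed $y \in \R^{n-1}$ the function $x' \mapsto k(|x-y|)$ is convex on $B(a,R)$.

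The key geometric observation is that, because both $a$ and $y$ lie in the equatorial hyperplane, the vector $y-a$ has vanishing $n$-th component, so $(x-a)\cdot(y-a) = (x'-a)\cdot(y-a)$. Using $|x-a|^2 = R^2$ on the hemisphere, this gives
$$
|x-y|^2 = R^2 + |y-a|^2 - 2(x'-a)\cdot(y-a),
$$
which is an \emph{affine} (and strictly positive) function of $x'$. Thus $k(|x-y|)$ equals the composition of the affine map $x' \mapsto |x-y|^2 > 0$ with the one-variable function $t \mapsto t^{-(n-2)/2}$ (when $n \geq 3$) or $t \mapsto -\tfrac{1}{2}\log t$ (when $n = 2$), each of which is convex on $(0,\infty)$. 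A quick derivative check confirms this: for $n \geq 3$ the second derivative of $t^{-(n-2)/2}$ equals $\tfrac{n(n-2)}{4}t^{-(n+2)/2} > 0$, and for $n=2$ the second derivative of $-\tfrac{1}{2}\log t$ equals $\tfrac{1}{2t^2} > 0$.

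Composition of a convex function of one variable with an affine map yields a convex function of $x'$, and integrating in $y$ against the positive measure $\mu$ preserves convexity. There is essentially no obstacle here; the only point one needs to be careful about is recognizing that the affine character of $|x-y|^2$ on the hemisphere depends on both $a$ and $y$ lying in the hyperplane $\R^{n-1}$, which is exactly the hypothesis available.
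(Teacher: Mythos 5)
Your proof is correct and uses essentially the same idea as the paper: substitute $|x-a|^2 = R^2$ on the hemisphere so that $|x-y|^2$ becomes affine in $x'$, then invoke convexity of the radial kernel. The paper simply carries out the same reduction by writing the denominator as $|y'|^2 - 2x'\cdot y' + R^2$ (with $a=0$) and checking directly that the second partial derivative is nonnegative, whereas you phrase it more abstractly as a convex function composed with an affine map; the content is identical.
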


\begin{proof}

This follows from a straight-forward computation of the second derivatives
of $x'\mapsto U^\mu (x', \sqrt{R^2-|x'-a|^2})$. Assuming for simplicity that
$a=0$ and computing, e.g., the derivatives in the $x_1$-direction in the case $n\geq 3$
we have, with $x_n^2 =R^2-|x'|^2$,
\begin{align*}
U^\mu (x', \sqrt{R^2-|x'|^2})&=U^\mu (x',x_n)
=\frac{1}{(n-2)|S^{n-1}|} \int \frac{d\mu(y')}{(|x'-y'|^2+x_n^2)^{\frac{n-2}{2}}} \\
&=\frac{1}{(n-2)|S^{n-1}|} \int \frac{d\mu(y')}{(|y'|^2-2x'\cdot y'+R^2)^{\frac{n-2}{2}}}, \\
\frac{\partial U^\mu}{\partial x_1}
&=\frac{1}{(n-2)|S^{n-1}|} \int \frac{(n-2)y_1\, d\mu(y')}{(|y'|^2-2x'\cdot y'+R^2)^{\frac{n}{2}}}, \\
\frac{\partial^2 U^\mu}{\partial x_1^2}
&=\frac{1}{(n-2)|S^{n-1}|} \int \frac{n(n-2)y_1^2\, d\mu(y')}{(|y'|^2-2x'\cdot y'+R^2)^{\frac{n+2}{2}}}\geq 0,
\end{align*}
where $|S^{n-1}|$ denotes the spherical area of $S^{n-1}$.

\end{proof}

\begin{proposition}
Assume that $\supp \mu\subset {B(a,R)}\cap \R^{n-1}$ for a ball $B(a,R)$
with center $a\in\R^{n-1}$. Then the density of classical balayage of $\mu$
onto the sphere $\partial B(a,R)$ is convex as a function of $x'$.

\end{proposition}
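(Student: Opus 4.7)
The plan is to imitate the direct computation used in the previous proposition, now applied to the Poisson-kernel density of balayage rather than to the Newtonian potential itself. Classical balayage $\nu$ of $\mu$ from $B(a,R)$ onto $\partial B(a,R)$ has density
\[
\frac{d\nu}{d\sigma}(z)=\int P(y,z)\,d\mu(y),\qquad P(y,z)=\frac{1}{|S^{n-1}|\,R}\cdot\frac{R^{2}-|y-a|^{2}}{|z-y|^{n}},
\]
where $P(y,\cdot)$ is the Poisson kernel of $B(a,R)$ with pole $y$ and $d\sigma$ is surface measure on the sphere. This explicit representation is what makes the problem amenable to a direct second-derivative calculation.

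After translating so that $a=0$, I would parametrize the upper hemisphere of $\partial B(0,R)$ by $x'$ in the open disk $\{|x'|<R\}\subset\R^{n-1}$ via $z=(x',\sqrt{R^{2}-|x'|^{2}})$. Because $\supp\mu\subset\R^{n-1}$ we have $y=(y',0)$, so the squared distance $|z-y|^{2}$ collapses, with all $x_{n}$-dependence disappearing, to
\[
D(x',y')=|y'|^{2}-2x'\!\cdot\! y'+R^{2}.
\]
Hence the density, regarded as a function of $x'$, becomes
\[
h(x')=\frac{1}{|S^{n-1}|\,R}\int\frac{R^{2}-|y'|^{2}}{D(x',y')^{n/2}}\,d\mu(y'),
\]
and by the reflection symmetry of $P$ across $\R^{n-1}$ the same formula describes the lower hemisphere.

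The decisive step will be the Hessian computation: two differentiations in $x'$ yield $\partial_i\partial_j D^{-n/2}=n(n+2)\,y_i y_j\,D^{-(n+4)/2}$, i.e.\ a positive scalar times the rank-one positive semidefinite tensor $y'\otimes y'$. The hypothesis $\supp\mu\subset\overline{B(a,R)}\cap\R^{n-1}$ is exactly what keeps $R^{2}-|y'|^{2}\ge 0$ on $\supp\mu$, so each integrand in the resulting expression for $\nabla_{x'}^{2}h(x')$ is positive semidefinite; integrating against the nonnegative measure $\mu$ preserves this, and convexity of $h$ follows. I do not foresee a real obstacle: the only points worth double-checking are that the support hypothesis is precisely what prevents a sign change in $R^{2}-|y'|^{2}$, and that the same algebraic calculation goes through unchanged in the logarithmic case $n=2$, where the disk Poisson kernel $\frac{1}{2\pi R}\cdot\frac{R^{2}-|y|^{2}}{|z-y|^{2}}$ has the identical form.
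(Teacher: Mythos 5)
Your proposal is correct and is essentially the same proof as the paper's: both translate to $a=0$, represent the balayage density as the $\mu$-integral of the Poisson kernel, observe that on $\partial B(0,R)$ with $y=(y',0)$ the denominator collapses to $|y'|^2-2x'\cdot y'+R^2$, and then read off convexity from a direct second-derivative computation together with $R^2-|y'|^2\ge 0$ on $\supp\mu$. The only difference is presentational: you write out the Hessian $\partial_i\partial_j D^{-n/2}=n(n+2)\,y_i y_j\,D^{-(n+4)/2}$ explicitly and flag the $n=2$ case, whereas the paper simply refers back to the analogous computation in the preceding proposition.
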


\begin{proof}

We may assume that $a=0$. Set $B_R=B(0,R)$.
Classical balayage of a point mass at any point $y\in B_R$
onto $\partial B_R$
gives the mass distribution on $\partial B_R$ with density
equal to the Poisson kernel (or normal derivative of Green
function), i.e., the function of $x\in \partial B_R$
$$
P_{B_R}(x, y)
=\frac{1}{|S^{n-1}|R}\frac{R^2 - |y|^2}{|x-y|^n}
=\frac{1}{|S^{n-1}|R}\frac{R^2 - |y|^2}{(|y|^2- 2x\cdot y+R^2)^{\frac{n}{2}}}.
$$
It follows that the density $\beta_0$ of classical balayage of $\mu$ is obtained by integrating the
above with respect to $y'\in \R^{n-1}$:
\begin{align*}
\beta_0(x,B_R)
&=\frac{1}{|S^{n-1}|R}\int \frac{R^2 - |y'|^2}{(|y'|^2- 2x\cdot y'+R^2)^{\frac{n}{2}}}d\mu(y') \\
&=\frac{1}{|S^{n-1}|R}\int \frac{R^2 - |y'|^2}{(|y'|^2- 2x'\cdot y'+R^2)^{\frac{n}{2}}}d\mu(y').
\end{align*}

Note that the last expression is a function only of $x'$.
One sees immediately that as such a function the second order derivative in any direction
is nonnegative (compare previous proof).
In fact, any derivative of even order is nonnegative (a similar remark applies to the previous proposition).

\end{proof}


\end{document}